\documentclass[11pt,oneside]{amsart}
\usepackage{geometry}                % See geometry.pdf to learn the layout options. There are lots.
\geometry{letterpaper}                   % ... or a4paper or a5paper or ...
\usepackage{graphicx}
\usepackage{amssymb}
\usepackage{epstopdf}
\usepackage{url}
\usepackage{verbatim}

\makeatletter

\renewcommand\subsubsection{\@secnumfont}{\bfseries}%
\renewcommand\subsubsection{\@startsection{subsubsection}{3}
  \z@{.5\linespacing\@plus.7\linespacing}{-.5em}%
  {\normalfont\bfseries}}
  
  \makeatother

%\usepackage{enumerate}

%Use the enumitem package instead of enumerate
\usepackage[shortlabels]{enumitem}
%then it will support the same suntax as the enumerate package.
%The enumerate package does not provide any extra configurations other than the label.

%\setlist[enumerate]{topsep=0pt,itemsep=-1ex,partopsep=1ex,parsep=1ex}
\setlist[enumerate]{topsep=0pt,partopsep=0pt}

\DeclareGraphicsRule{.tif}{png}{.png}{`convert #1 `dirname #1`/`basename #1 .tif`.png}
\usepackage{amsmath,amsthm,amscd,amssymb}
\usepackage{latexsym}
\usepackage[colorlinks,citecolor=red,pagebackref,hypertexnames=false]{hyperref}
\numberwithin{equation}{section}
\theoremstyle{definition}
\newtheorem{theorem}{Theorem}[subsection]
\newtheorem{thm}[theorem]{Theorem}
\newtheorem{lemma}[theorem]{Lemma}
\newtheorem{corollary}[theorem]{Corollary}
\newtheorem{prop}[theorem]{Proposition}

\theoremstyle{definition}

\newtheorem{problem}[theorem]{Problem}
\newtheorem{example}[theorem]{Example}

\newtheorem{remark}[theorem]{Remark}

\def\RR{\mathbb{R}}
\def\CC{\mathbb{C}}

\def\QQ{\mathbb{Q}}
\def\ZZ{\mathbb{Z}}
\def\NN{\mathbb{N}}

\def\supp{\mathrm{supp}}

 %messes up align enviroment

\newcommand{\rbr}[1]{\left( {#1} \right)}

\newcommand{\cbr}[1]{\left\{ {#1} \right\}}

\newcommand{\abs}[1]{\left| {#1} \right|}

\def\one{\mathbf{1}}

\newcommand*\wc{{}\cdot{}}
%\newcommand*\wc{ \, \cdot \,}
%wc for wildcard

%\newcommand{\D}[2]{d\left( {#1}, \right)}

%\newcommand{\D}[2]{d\left( {#1}, \right)}
%\newcommand{\D}[2]{\left\| {#1} - {#2}  \right\|)}

\setlength{\parindent}{0pt}
\setlength{\parskip}{11pt}

%section numbering

\begin{document}

\author{Thomas Cai \and Kyle Hambrook}
\title{On the Exact Fourier Dimension of Sets of Well-Approximable Matrices}
\begin{abstract}
We compute the exact Fourier dimension of the set of $\Psi$-well-approximable $m \times n$ matrices (and the set of $\Psi$-well-approximable numbers) in the homogeneous and inhomogeneous cases  
for any approximation function $\Psi$ satisfying $\sum_{q \in \ZZ^n} \Psi(q)^m < \infty$. 
\end{abstract}

\maketitle

\section{Introduction}

\subsection{Background} 

The Fourier dimension of a set $E \subseteq \RR^{d}$ is 
$$
\dim_F E = \sup\cbr{\beta \in [0,d] : \exists \mu \in \mathcal{P}(E) \text{ such that } |\widehat{\mu}(\xi)| \ll |\xi|^{-\beta/2} \quad \forall \xi \in \RR^d, \xi \neq 0}. 
$$
Here and elsewhere $\mathcal{P}(E)$ is the set of Borel probability measures on $\RR^d$ whose support is contained in $E$. 
Fourier dimension is closely related to Hausdorff dimension. 
Indeed, as a consequence of Frostman's lemma, the Hausdorff dimension of a Borel set $E \subseteq \RR^d$ is 
$$
\dim_H E = \sup\cbr{ \alpha \in [0,d] : \exists \mu \in \mathcal{P}(E) \text{ such that } \int_{\RR^d} |\widehat \mu(\xi) |^2 |\xi|^{s-d} < \infty   }. 
$$
Thus the Fourier dimension concerns the pointwise Fourier decay of measures supported on a set and the Hausdorff dimension is about the $L^2$ Fourier decay of such measures. 
More concretely, it follows that $$\dim_F E \leq \dim_H E$$ for every Borel set $E \subseteq \RR^d$.
Hausdorff and Fourier dimension are generally different. 
For $k < d$, every $k$-dimensional hyperplane in $\RR^d$ has Hausdorff dimension $k$ and Fourier dimension 0. 
The middle-$1/3$ Cantor set has Hausdorff dimension $\log 2 / \log 3$ and Fourier dimension 0. 
In fact, for every $\theta \in (0,1)$, the middle-$\theta$ Cantor set $C_{\theta}$ has $\dim_F C_{\theta} < \dim_H C_{\theta}$. 
Moreover, $0 < \dim_F C_{\theta}$ for almost every $\theta \in (0,1)$.

A set $E \subseteq \RR^d$ is called a Salem if $\dim_F E = \dim_H E$. 
%``Natural'' subsets of $\RR^d$ are not necessarily Salem. 
%It is relatively easy to find Salem sets of dimension $0, d-1$ or $d$.
Every set with Hausdorff dimension 0 is a Salem set of dimension 0. 
$\RR^d$ is a Salem set of dimension $d$, as is any subset of $\RR^d$ that contains an open ball. 
Every sphere in $\RR^d$ is a Salem set of dimension $d-1$. 
%
%It is harder to find explicit examples of Salem sets of dimensions other than $0, d-1,$ and $d$. 
Salem \cite{Salem51} proved that for every $\alpha \in [0,1]$ there exists a Salem set in $\RR$ of dimension $\alpha$ using random Cantor-type sets. 
Kahane \cite{kahane-1966-brownian} (see also \cite{kahane-1966-fourier}, \cite{kahane-book}) proved that 
for every $\alpha \in [0,d]$ there exists a Salem set in $\RR^d$ of dimension $\alpha$ 
by considering images of stochastic processes.
%by showing that applying certain random processes to compact sets result in sets that are almost surely Salem.  
Other random constructions of Salem sets exist (see, e.g. \cite{bluhm-1}, \cite{chen-seeger}, \cite{shmerkin-suomala}, \cite{LP2009}). 

Kaufman \cite{Kaufman} was the first to find non-random (also called explicit) examples of Salem sets with dimension different from $0, d-1,$ and $d$. 
Specifically, Kaufman showed that the set 
$$
E(\tau) = \{x \in \RR : |xq-r| < |q|^{-\tau} \text{ for infinitely many $(r,q) \in \ZZ \times \ZZ$} \}
$$
is a Salem set of dimension $2/(1+\tau)$ for all $0 < \tau < 1$. 
$E(\tau)$ is called the set of $\tau$-well-approximable numbers. 
%The study of sets like this is important in Diophantine approximation.
Jarn{\'\i}k \cite{Jarnik29} and Besicovitch \cite{Bes} proved earlier that the Hausdorff dimension of $E(\tau)$ is $2/(1+ \tau)$ for $0 < \tau < 1$. 
(Note: Dirichlet's approximation theorem implies $E(\tau) = \RR$ if $\tau \geq 1$.) 

\subsection{Definition of Well-Approximable Matrices}

Our main theorem concerns the following generalization of $E(\tau)$ 
that is often studied in metric Diophantine approximation. 

Let $m,n \in \NN$. Let $Q$ be a subset of $\ZZ^n$. 
Let $\Psi : \ZZ^n \to [0,\infty)$. Let $\theta \in \RR^m$. 
Define $E(m,n,Q,\Psi,\theta)$ to be the set of all points 
$$
x = (x_{11}, \ldots, x_{1n}, \ldots, x_{m1}, \ldots, x_{mn}) \in \RR^{mn}
$$ 
such that
$$
\max_{1 \leq i \leq m} | \sum_{j=1}^{n} x_{ij} q_j - r_i - \theta_i | < \Psi(q) \text{ for infinitely many $(r,q) \in \ZZ^m \times Q$}.
$$
By identifying $x$ with the $m \times n$ matrix with $x_{ij}$ in the $i$-th row and $j$-th column, the defining inequality becomes 
$$
|xq - r - \theta| < \Psi(q) \text{ for infinitely many $(r,q) \in \ZZ^m \times Q$}.
$$
Note: In this paper $| \wc |$ denotes the max norm and $| \wc |_2$ denotes the Euclidean norm. 
The set $E(m,n,Q,\Psi,\theta)$ is called a set of $\Psi$-well-approximable matrices (or a set of $\Psi$-well-approximable linear forms).
Note $E(\tau) = E(1,1,\ZZ, |q|^{-\tau}, 0)$. 

The function $\Psi$ is called an approximation function. 
Note that redefining $\Psi$ at finitely many points does not change the set $E(m,n,Q,\Psi,\theta)$. 
Note also that for every $x \in \RR^{mn}$ and $q \in \ZZ^n$ there is an $r \in \ZZ^m$ such that $|xq - r| \leq 1/2$.
So replacing $\Psi$ by $\max\cbr{\Psi,1/2}$ does not change the set. 
Therefore, in the proofs, we can (and will) always assume $\Psi \leq 1/2$ and $\Psi(0)=1/2$. 

The set $Q$ may be called the set of denominators. 
Since $E(m,n,Q,\Psi,\theta) = E(m,n,\ZZ^n, \Psi \one_Q,\theta)$, 
it is not strictly necessary to have $Q$ explicitly in the definition, 
but it is convenient to do so. 

The cases $\theta = 0$ and $\theta \neq 0$ are called the homogeneous case and inhomogeneous case, respectively. 

\subsection{Previous Results}

Many authors have studied the Hausdorff dimension of $E(m,n,Q,\Psi,\theta)$ at various level of generality 
(see \cite{hambrook-transactions} for a short survey). 
%(see \cite{BV-annal}, \cite{BV-slicing}, \cite{BF}, \cite{bovey-dodson}, \cite{Eggleston}, \cite{Dickinson}, \cite{Dodson}, \cite{HS})
We state only the results most relevant to our main theorem. 
%
%Levesley \cite{Levesley} proved that 
%$$
%\dim_{H} E(1,1,\ZZ,|q|^{-\tau},\theta) = \min\cbr{\frac{2}{1+\tau},1}.
%$$
\begin{comment}
Fraenkel \cite{BF} obtained formulas for $\dim_H E(1,1,Q,|q|^{-\tau},0)$ when $Q$ is an arbitrary subset of $\ZZ$; Eggleston \cite{Eggleston} earlier proved special case. 
Dodson \cite{Dodson}, Hinokuma and Shiga \cite{HS}, and Dickinson \cite{Dickinson} 
obtained notable results for $\dim_H E(1,1,Q,\Psi,0)$ when $\Psi$ is a more general function than $|q|^{-\tau}$. 
Minkowski's theorem on linear forms implies $ E(m,n,\ZZ^n,|q|^{-\tau},0) = \RR^{mn}$ when $\tau \leq n/m$. 
Bovey and Dodson \cite{BD} showed the Hausdorff dimension of $E(m,n,\ZZ,|q|^{-\tau},0)$ is $m(n-1) + (m+n)/(1+\tau)$ if $\tau > n/m$; 
Jarn{\'\i}k \cite{Jarnik} did the $n=1$ case earlier. 
In the inhomogeneous case, Levesley \cite{Levesley} proved a formula $\dim_{H} E(m,n,Q,\Psi,\theta)$ when $\Psi=\psi(|q|)$ with $\psi$ monotonic. 
The mass transference principle and slicing technique of Beresnevich and Velani \cite{BV-annal}, \cite{BV-slicing} may also be used to compute the Hausdorff dimension of $E(m,n,)$.
\end{comment}
%
%
Bovey and Dodson \cite{bovey-dodson} showed that 
%the Hausdorff dimension of $E(m,n,\ZZ,|q|^{-\tau},0)$ is $m(n-1) + (m+n)/(1+\tau)$ if $\tau > n/m$. 
$$
\dim_{H} E(m,n,\ZZ,|q|^{-\tau},0) = \min\cbr{ \dfrac{m(n-1)}{1+\tau}, mn }. 
$$
%(When $\tau \leq n/m$, Minkowski's theorem on linear forms implies $ E(m,n,\ZZ^n,|q|^{-\tau},0) = \RR^{mn}$.) 
Also in the homogeneous case, Rynne \cite{RYNNE1998166} proved a completely general result. Let
$$
\eta(Q,\Psi) = \inf\{\eta \geq 0 : \sum_{\substack{q \in Q \\ q \neq 0}} |q|^m \rbr{\frac{\Psi(q)}{|q|}}^{\eta} < \infty \}. 
$$
Rynne showed that 
$$
\dim_{H} E(m,n,Q,\Psi,0) = \min\{m(n-1) + \eta(Q,\Psi),mn\}.
$$
For the inhomogeneous case, 
Theorem 1 of Beresnevich and Velani \cite{BV-slicing} implies that, if $\Psi(q) \rightarrow 0$ as $|q| \rightarrow \infty$, 
then $$\dim_{H} E(m,n,Q,\Psi,\theta) = \min\{m(n-1) + \eta(Q,\Psi),mn\}.$$

For the Fourier dimension of $E(m,n,Q,\Psi,\theta)$, the major results are the following. 
Hambrook \cite{hambrook-transactions} proved 
%the Fourier dimension of $E(m,n,\ZZ^n,|q|^{-\tau},\theta)$ is bounded below by $2n/(1+\tau)$ if $\tau > n/m$. 
$$\dim_{F} E(m,n,\ZZ^n,|q|^{-\tau},\theta) \geq \min\cbr{\frac{2n}{1+\tau},mn}.$$ 
(In fact, \cite{hambrook-transactions} also proves a lower bound 
on the Fourier dimension for more general approximation functions $\Psi$ 
and sets of denominators $Q$. 
However, that result is less general and less aesthetically pleasing than the lower bound proved in the present paper.) 
Hambrook and Yu \cite{hambrook-yu} established 
%that the Fourier dimension of $E(m,n,\ZZ^n,|q|^{-\tau},0)$ is also bounded above by ${2n}/{(1+\tau)$ when $\tau > n/m$. 
the matching upper bound to obtain 
$$\dim_{F} E(m,n,\ZZ^n,|q|^{-\tau},0) = \min\cbr{\frac{2n}{1+\tau},mn}.$$  
By comparing to Bovey and Dodson's formula for the Hausdorff dimension, this proved $E(m,n,\ZZ,|q|^{-\tau},0)$ is not Salem unless $m=n=1$. 

\subsection{Main Theorem}

The following is our main result, which gives a formula for the Fourier dimension of $E(m,n,Q,\Psi, \theta)$ for all $m,n \in \NN$, all $\theta \in \RR^m$, all $Q \subseteq \ZZ^n$, 
and very general functions $\Psi$. 
It can be viewed as a Fourier dimension analog of the results of Rynne \cite{RYNNE1998166} and of Beresnevich and Velani \cite{BV-slicing} mentioned above. 
Define $\Psi_*(q)$ = $\Psi(q) |q|^{-1}$ and 
$$
s(Q,\Psi) = \inf\{ s \geq 0 : \sum_{q \in Q}  \Psi_{*}(q)^s  <  \infty \}.
$$
\begin{thm}\label{main-thm-2}
If $\sum_{q \in Q} \Psi(q)^m < \infty$, then 
$\dim_F E(m,n,Q,\Psi, \theta) = \min\{2s(Q,\Psi), mn\}$.
\end{thm}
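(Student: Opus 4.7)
I would establish the matching lower and upper bounds separately.

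\textbf{Lower bound.} To prove $\dim_F E(m,n,Q,\Psi,\theta) \geq \min\{2s(Q,\Psi), mn\}$, I would adapt the Kaufman-type construction of Hambrook \cite{hambrook-transactions} and Hambrook--Yu \cite{hambrook-yu} to the generality of arbitrary $Q$, $\Psi$, and $\theta$. For each $s < \min\{s(Q,\Psi), mn/2\}$, construct a probability measure $\mu$ on $E \cap [0,1]^{mn}$ as a weak-$*$ limit, along a suitably sparse sequence of scales $N$, of absolutely continuous densities
\[
\mu_N(x) = c_N^{-1} \sum_{q \in Q_N} \sum_{r \in \ZZ^m} \phi\bigl(\Psi(q)^{-1}(xq - r - \theta)\bigr),
\]
where $\phi$ is a Schwartz bump on $\RR^m$, $Q_N = \{q \in Q : |q| \in [A_N, B_N]\}$ is a dyadic-type shell, and $c_N \asymp \sum_{q \in Q_N} \Psi(q)^m$ normalizes total mass. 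Poisson summation in $r$ gives
\[
\sum_{r \in \ZZ^m} \phi\bigl(\Psi(q)^{-1}(xq - r - \theta)\bigr) = \Psi(q)^m \sum_{k \in \ZZ^m} \hat\phi\bigl(\Psi(q)k\bigr) e^{-2\pi i k\cdot\theta} e^{2\pi i (kq^T)\cdot x},
\]
which localizes the Fourier spectrum of $\mu_N$ to the rank-one integer matrices $\xi = kq^T$. Using $|kq^T| = |k||q|$ in max-norms and the Schwartz decay of $\hat\phi$, the bound $|\widehat{\mu_N}(\xi)| \lesssim |\xi|^{-s}$ reduces to the defining convergence of $s(Q,\Psi)$. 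A Borel--Cantelli argument leveraging the hypothesis $\sum_q \Psi(q)^m < \infty$ places any weak-$*$ limit inside $E$.

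\textbf{Upper bound.} Let $\mu \in \mathcal{P}(E)$ satisfy $|\widehat{\mu}(\xi)| \leq C(1+|\xi|)^{-\beta/2}$; I want $\beta \leq \min\{2s(Q,\Psi), mn\}$. The bound $\beta \leq mn$ is trivial; the crucial bound is $\beta \leq 2s(Q,\Psi)$. Set $S(r,q) = \{x : |xq - r - \theta| < \Psi(q)\}$, so that $\mu\bigl(\bigcup_{q \in Q,|q| \geq N} \bigcup_r S(r,q)\bigr) = 1$ for every $N$. Dominating $\one_{S(r,q)}$ by a Schwartz majorant $\phi\bigl(\Psi(q)^{-1}(\cdot q - r - \theta)\bigr)$, summing over $r$, and applying Parseval to the resulting $\ZZ^{mn}$-periodic function yield
\[
1 \lesssim \sum_{q \in Q,\, |q| \geq N} \Psi(q)^m \sum_{k \in \ZZ^m} \bigl|\hat\phi(\Psi(q)k)\bigr|\,\bigl|\widehat{\mu}(kq^T)\bigr|.
\]
The $k=0$ contribution is $O\bigl(\sum_{|q|\geq N}\Psi(q)^m\bigr) \to 0$. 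For $k \neq 0$, inserting $|\widehat{\mu}(kq^T)| \lesssim (|k||q|)^{-\beta/2}$ and evaluating the standard Schwartz sum $\sum_{k \neq 0} |\hat\phi(\Psi(q)k)| |k|^{-\beta/2} \asymp \Psi(q)^{\beta/2 - m}$ (valid when $\beta/2 < m$, a regime secured a priori by $\sum_q \Psi(q)^m < \infty$) collapses the inequality to
\[
1 \lesssim \sum_{q \in Q,\, |q| \geq N} \Psi_*(q)^{\beta/2}.
\]
If $\beta > 2s(Q,\Psi)$, the right side is a vanishing tail, contradicting the uniform lower bound by $1$.

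\textbf{Main obstacle.} I expect the technical heart to lie in the lower bound: tuning the shells $Q_N$, the bump $\phi$, and the sparsification of scales so that $\mu_N$ simultaneously carries the required Fourier decay \emph{and} has a subsequential weak-$*$ limit supported on the limsup set $E$ itself rather than on a thin neighborhood. The generality of $Q \subseteq \ZZ^n$ and $\Psi$ precludes monotonicity or dyadic-regularity shortcuts available in the $|q|^{-\tau}$ setting of \cite{hambrook-yu}, forcing a more robust Borel--Cantelli scheme. The inhomogeneous shift $\theta$ contributes only harmless phases $e^{-2\pi i k\cdot\theta}$ in the Fourier estimates, but its presence must be tracked throughout the construction to guarantee that the limit measure concentrates on the $\theta$-shifted set.
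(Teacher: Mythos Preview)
Your upper-bound argument is essentially the paper's: dominate the slab indicators by Schwartz majorants, apply Parseval to the periodization, insert the assumed decay $|\widehat\mu(\xi)|\ll|\xi|^{-\beta/2}$, and reach a contradiction from the tail of $\sum_q\Psi_*(q)^{\beta/2}$. The paper packages this via an ``Inhomogeneous Lattice Lemma'' plus Borel--Cantelli rather than your ``$1\ll\text{tail}$'' formulation, but the content is the same.

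The lower-bound sketch, however, misses the specific devices that make the construction go through. First, your shells $Q_N=\{q\in Q:|q|\in[A_N,B_N]\}$ are dyadic in $|q|$, whereas the paper takes $Q(M)=\{q\in Q:M/2<\Psi_*(q)^{-s}\leq M\}$, dyadic in $\Psi_*(q)^{-s}$; for $\Psi$ not of the form $|q|^{-\tau}$ these are genuinely different, and only the latter gives the uniform control of $\Psi_*(q)$ across the shell that the Fourier estimate needs. Second, after Poisson summation the coefficient $\widehat{F_M}(\ell)$ is a sum over those $q$ in the shell admitting $k\in\ZZ^m$ with $kq^T=\ell$; bounding the number of such $q$ requires a \emph{divisor bound} $|D(\ell)|\ll w_\zeta(|\ell|)$, which the paper proves by reducing to the classical integer divisor function and which is absent from your outline. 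Third, to kill the low frequencies the paper further thins $Q(M)$ to $Q'(M)$ by deleting every $q$ that divides some nonzero $\ell$ with $|\ell|\ll M^{1/(2mn)}$, forcing $\widehat{F_M}(\ell)=0$ there; nothing in your sketch plays this role. Finally, the support of the limit measure is not obtained by Borel--Cantelli---the hypothesis $\sum_q\Psi(q)^m<\infty$ is \emph{not used at all} in the lower bound---but directly from the \emph{product} structure $d\mu_k=f_0F_{M_1}\cdots F_{M_k}\,dx$, whose supports are nested and land in the $\limsup$ set by construction. A single-scale average followed by a weak-$*$ limit does not obviously concentrate on $E$.
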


\begin{remark}\label{divergence}
We do not have a formula for the Fourier dimension of $E(m,n,Q,\Psi, \theta)$ for all $\Psi$ when $\sum_{q \in Q} \Psi(q)^m = \infty$. 
The Fourier dimension does not necessarily equal $\min\{2s(Q,\Psi), mn\}$ in that case. 
However, we can conclude that $\dim_F E(m,n,Q,\Psi, \theta) = mn$ for certain classes of functions. 
See Section \ref{divergence-case} for details.
\end{remark}

We prove Theorem \ref{main-thm-2} by proving the following two propositions.    

\begin{prop}\label{main-upper-bound}
If $\sum_{q \in Q} \Psi(q)^m < \infty$, then 
$\dim_F E(m,n,Q,\Psi, \theta) \leq 2s(Q,\Psi)$.
\end{prop}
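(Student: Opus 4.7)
The plan is to bound $\mu(A_q)$ uniformly in $q$, where $\mu$ is any Borel probability measure on $E := E(m,n,Q,\Psi,\theta)$ satisfying $|\widehat{\mu}(\xi)| \leq C|\xi|^{-\beta/2}$ and
$$
A_q = \{x \in \RR^{mn} : |xq - r - \theta| < \Psi(q) \text{ for some } r \in \ZZ^m \}.
$$
Since $E = \limsup_{|q|\to\infty,\, q \in Q} A_q$ straight from the definition, the easy Borel--Cantelli lemma forces $\sum_{q \in Q} \mu(A_q) = \infty$. Comparing this against the upper bound for $\mu(A_q)$ that I will derive, together with the hypothesis $\sum_{q \in Q} \Psi(q)^m < \infty$, will yield $\beta \leq 2 s(Q,\Psi)$.

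The first step is a Poisson-summation computation. Fix a Schwartz bump $\chi : \RR \to [0,\infty)$ with $\chi \geq \one_{[-1,1]}$ and form the periodic majorant
$$
\Phi_q(x) = \sum_{r \in \ZZ^m} \phi_q(xq - r - \theta), \qquad \phi_q(y) = \prod_{i=1}^{m} \chi( y_i / \Psi(q) ),
$$
so that $\Phi_q \geq \one_{A_q}$. Expanding the periodization via Poisson summation in the torus variable gives
$$
\Phi_q(x) = \sum_{k \in \ZZ^m} \widehat{\phi}_q(k)\, e^{-2\pi i k \cdot \theta}\, e^{2\pi i (k \otimes q) \cdot x},
$$
where $k \otimes q$ denotes the $m \times n$ matrix with entries $k_i q_j$. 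Integrating against $\mu$, the $k=0$ term contributes $O(\Psi(q)^m)$, while for $k \neq 0$ and $q \neq 0$ one has $|k \otimes q| = |k| |q|$ in the max norm, and so
$$
\mu(A_q) \lesssim \Psi(q)^m + \Psi(q)^m |q|^{-\beta/2} \sum_{k \in \ZZ^m \setminus \{0\}} (1+\Psi(q)|k|)^{-N} |k|^{-\beta/2}.
$$
Splitting the $k$-sum at $|k| \approx \Psi(q)^{-1}$ and choosing $N$ large, in the regime $\beta/2 < m$ the tail evaluates to $\Psi(q)^{\beta/2 - m}$ up to constants, giving the clean bound
$$
\mu(A_q) \lesssim \Psi(q)^m + \Psi_*(q)^{\beta/2}.
$$

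To conclude, suppose for contradiction that $\beta/2 > s(Q,\Psi)$. Then both $\sum_q \Psi(q)^m$ and $\sum_q \Psi_*(q)^{\beta/2}$ converge, so $\sum_q \mu(A_q) < \infty$, which by Borel--Cantelli forces $\mu(E) = 0$, contradicting $\mu(E) = 1$. This yields $\beta/2 \leq s(Q,\Psi)$ whenever $\beta/2 < m$. The boundary case $\beta/2 = m$ follows by perturbing $\beta$ slightly downward (the decay hypothesis is monotone in $\beta$), and the regime $\beta/2 > m$ is even easier: the tail sum is then $O(1)$, so $\mu(A_q) \lesssim \Psi(q)^m$ is summable by hypothesis, again forcing $\mu(E) = 0$. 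Taking the supremum over admissible $\beta$ gives $\dim_F E \leq 2 s(Q,\Psi)$.

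The main obstacle will be executing the Fourier tail estimate so that the exponent comes out exactly $\beta/2$ rather than something lossy. The key structural feature is that all nonzero frequencies in the expansion of $\Phi_q$ are Kronecker products $k \otimes q$, which is precisely what couples the smoothing scale $\Psi(q)$ with the size $|q|$ and lets the ratio $\Psi_*(q) = \Psi(q)/|q|$ emerge as the correct quantity governing the upper bound on the Fourier dimension.
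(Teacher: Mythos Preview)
Your proposal is correct and follows essentially the same strategy as the paper: majorize $\one_{A_q}$ by a smooth $\ZZ^m$-periodic function in the variable $xq-\theta$, Fourier-expand so that the only frequencies appearing are of the form $kq^T$ (your $k\otimes q$), plug in the decay $|\widehat{\mu}(kq^T)|\ll(|k||q|)^{-\beta/2}$, sum the resulting geometric tail to obtain $\mu(A_q)\lesssim \Psi(q)^m+\Psi_*(q)^{\beta/2}$, and finish with Borel--Cantelli using the hypothesis $\sum_q\Psi(q)^m<\infty$ together with the definition of $s(Q,\Psi)$.

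The only difference is packaging. The paper isolates the estimate $\mu(L_{\delta,q,\theta}^m)\ll\delta^m\bigl(1+\sum_{0<|t|\le 2/\delta}|\widehat{\mu}(tq^T)|\bigr)$ as a standalone ``Inhomogeneous Lattice Lemma'' (proved by convolving a bump with the surface measure on the union of hyperplanes $q\cdot x\in\ZZ+\theta_i$ and computing Fourier coefficients), and for that lemma it first reduces via periodicity to measures supported in $[0,1]^{mn}$. Your direct Poisson-summation of a product bump is a slightly more elementary route to the same inequality and sidesteps the compact-support reduction, since the uniformly convergent Fourier series can be integrated against any finite $\mu$. Both arguments hinge on exactly the structural point you highlight at the end: the nonzero frequencies are rank-one matrices $kq^T$, so $|kq^T|=|k||q|$ couples the smoothing scale $\Psi(q)$ to $|q|$ and makes $\Psi_*(q)$ emerge.
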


\begin{prop}\label{main-lower-bound}
$\dim_F E(m,n,Q,\Psi, \theta) \geq \min\cbr{2s(Q,\Psi),mn}$.
\end{prop}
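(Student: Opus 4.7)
My plan is to prove the lower bound by constructing, for each $s$ with $0 < s < \min\cbr{s(Q,\Psi), mn/2}$, a probability measure $\mu \in \mathcal{P}(E(m,n,Q,\Psi,\theta))$ whose Fourier transform satisfies $|\widehat\mu(\xi)| \ll |\xi|^{-s}$. Since $E(m,n,Q,\Psi,\theta)$ is $\ZZ^{mn}$-periodic, it suffices to work on the torus $\RR^{mn}/\ZZ^{mn}$, following the Kaufman paradigm (as in \cite{Kaufman} and \cite{hambrook-transactions}) adapted to general $\Psi$, general $Q$, and arbitrary $\theta$. The natural building block at each $q \in Q$ with $q \neq 0$ is a smooth periodic thickening $f_q$ of the indicator of the strip $T_q = \cbr{x : |xq - r - \theta| < \Psi(q) \text{ for some } r \in \ZZ^m}$, constructed by periodizing a fixed bump $\phi$ on $\RR^m$; a direct computation (essentially Poisson summation in the $m$ variables $y = xq$) shows that the Fourier series of $f_q$ on the torus is supported on the sublattice $\cbr{kq^T : k \in \ZZ^m} \subseteq \ZZ^{mn}$, with coefficient $e^{-2\pi i k\cdot\theta}\widehat\phi(\Psi(q)k)$ at $\xi = kq^T$.

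The next step is to extract inductively from $Q$ a lacunary sequence $q_1, q_2, \ldots$ with $|q_{j+1}|$ growing so rapidly relative to the Fourier support of the earlier blocks that decompositions $\xi = \sum_j k^{(j)}q_j^T$ with $k^{(j)} \in \ZZ^m$ are essentially unique, and to build $\mu$ as a weak-$\ast$ limit of Riesz-product-type measures $\prod_j (1 + c_j \phi_{q_j})$, where $\phi_{q_j}$ is the mean-zero version of $f_{q_j}$ and the weights scale as $c_j \asymp \Psi_*(q_j)^s$. The lacunarity forces the convolution expansion for $\widehat\mu(\xi)$ to collapse, at each $\xi = kq_J^T \neq 0$, to an essentially single term of size $c_J|\widehat\phi(\Psi(q_J)k)|$, where $J$ is the top nonzero scale. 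Combining this with $|\xi| \asymp |k||q_J|$, splitting into the regimes $|k|\Psi(q_J) \lessgtr 1$, and using rapid Schwartz decay of $\widehat\phi$ yields $|\widehat\mu(\xi)| \ll |\xi|^{-s}$. Each factor $(1 + c_j\phi_{q_j})$ is arranged to be essentially supported on $T_{q_j}$, so $\supp \mu \subseteq \bigcap_j T_{q_j} \subseteq \limsup_j T_{q_j} \subseteq E(m,n,Q,\Psi,\theta)$ (using $|q_j| \to \infty$).

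The main obstacle is the simultaneous balancing act in the construction. The weights $c_j$ must be small enough to keep $\prod_j (1 + c_j\phi_{q_j})$ nonnegative and the weak limit a bona fide probability measure, yet large enough (of order $\Psi_*(q_j)^s$) for the Fourier exponent to close up to the target $s$. The sequence $\cbr{q_j}$ must be extracted from $Q$ with enough lacunarity that the Fourier-support sublattices of distinct scales are near-disjoint, while simultaneously tapping the divergent sum $\sum_{q \in Q} \Psi_*(q)^s = \infty$ (which holds since $s < s(Q,\Psi)$) sufficiently often that the measure is nondegenerate. The saturated regime $s(Q,\Psi) \geq mn/2$ is handled by the same construction, taking $s$ arbitrarily close to $mn/2$ to conclude $\dim_F E(m,n,Q,\Psi,\theta) = mn$. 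Finally, the inhomogeneous phases $e^{-2\pi i k\cdot\theta}$ enter only as unit-modulus factors in the Fourier coefficients and play no role in the absolute-value Fourier estimates, so the argument is uniform in $\theta$ and the homogeneous and inhomogeneous cases are handled at once.
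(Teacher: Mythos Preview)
Your Riesz-product scheme with a single $q_j$ per scale has a genuine gap: the support and Fourier-decay requirements are incompatible. If $c_j < 1$, the factor $1 + c_j\phi_{q_j} = (1-c_j) + c_j f_{q_j}$ is bounded below by $1-c_j > 0$ everywhere on the torus, so its topological support is all of $[0,1]^{mn}$, not $T_{q_j}$; the claim $\supp\mu \subseteq \bigcap_j T_{q_j}$ therefore fails outright, and $\mu$ is not a measure on $E(m,n,Q,\Psi,\theta)$. If instead $c_j = 1$, the factor equals $f_{q_j}$ and the support is correct, but then at the frequency $\xi = kq_j^T$ with $|k|=1$ (so $|\xi| = |q_j|$) the scale-$j$ contribution to $\widehat\mu(\xi)$ is $\widehat\phi(\Psi(q_j)k)$, which is bounded away from zero because $\Psi \leq 1/2$; lacunarity gives no cancellation at this frequency, and $|\widehat\mu(kq_j^T)|$ stays bounded away from $0$ along the full sequence, so $|\widehat\mu(\xi)| \ll |\xi|^{-s}$ is impossible. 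Your intermediate choice $c_j \approx \Psi_*(q_j)^s$ makes the Fourier bookkeeping close up, but only by sacrificing the support condition.

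The paper resolves this tension not with weights but by replacing the single bump at each scale by an \emph{average} $F_M = |Q'(M)|^{-1}\sum_{q \in Q'(M)} f_q$ over a large finite set $Q'(M) \subseteq Q$. One defines $Q(M) = \{q \in Q : M/2 < \Psi_*(q)^{-s} \leq M\}$, uses the divergence of $\sum_q \Psi_*(q)^s$ to get $|Q(M)| \geq M/(2\log_2^{n+1}M)$ for an unbounded family of dyadic $M$, and prunes to $Q'(M)$ by deleting the $q$'s that divide some small nonzero $\ell \in \ZZ^{mn}$ (a divisor bound shows only $o(|Q(M)|)$ are removed). Then $F_M$ is genuinely supported on $\bigcup_{q \in Q'(M)} T_q$, so the product $f_0\prod_k F_{M_k}$ is supported in $E(m,n,Q,\Psi,\theta)$; and for $\ell \neq 0$ only the $q$ in $D(\ell) = \{q : \ell = kq^T \text{ for some } k \in \ZZ^m\}$ contribute to $\widehat{F_M}(\ell)$, with $|D(\ell)| \leq C_\zeta w_\zeta(|\ell|)$ and each term at most $(\Psi_*(q)|\ell|)^{-s} \leq M|\ell|^{-s}$, whence $|\widehat{F_M}(\ell)| \ll |\ell|^{-s} w_\zeta(|\ell|)\log^{n+1}M$ after dividing by $|Q'(M)|$. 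It is this averaging-plus-divisor-bound mechanism, absent from your sketch, that produces the Fourier decay while keeping the support inside the target set.
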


Proposition \ref{main-upper-bound} is proved by generalizing the method of \cite{hambrook-yu}. 
In particular, we must prove an inhomogeneous version of the Theorem 2.3 in \cite{hambrook-yu}, 
which is the main techincal result of that paper.  
Our result is Lemma \ref{thm:linearform}; we call it the Inhomogeneos Lattice Lemma. 

Proposition \ref{main-upper-bound} is proved by modifying the argument of \cite{hambrook-transactions}. 
Significant changes are required as the hypothesis $\sum_{q \in Q} \Psi(q)^m < \infty$ 
is in a different form and more general than the hypothesis imposed on $\Psi$ and $Q$ in \cite{hambrook-transactions}. 
The key idea is the definitions of the sets $Q(M)$ and $Q'(M)$ (which are subsets of $Q$) in Subsection \ref{sec-FM-prelim}. 
The set $Q(M)$ is defined significantly differently than the set of the same name in \cite{hambrook-transactions} because 
it is adapted to the form of the hypotheses on $\Psi$ and $Q$. 
The assumption that $\sum_{q \in Q} \Psi(q)^m$ converges is critical to lower bounding the size of $Q(M)$, 
which is in turn critical to obtaining the necessary Fourier decay estimate on the measure we construct. 
The set $Q'(M)$ has no analog in \cite{hambrook-transactions}. 
It is defined by removing from $Q(M)$ the vectors $q$ that divide small non-zero vectors $\ell \in \ZZ^{mn}$. 
The idea of defining $Q'(M)$ (and the proof of lower bound on its size) is borrowed from \cite{hambrook-fraser-Rn}. 
With $Q(M)$ and $Q'(M)$ properly defined, the rest of the proof of Proposition \ref{main-lower-bound} 
is similar to the proof in \cite{hambrook-transactions}.

%\begin{remark}
%Different generalizations of $E(\tau)$ been used to prove the existence of non-random Salem sets in $\RR^d$ of every prescribed dimension $\alpha in [0,d]$. 
%See \cite{bluhm-1}, \cite{hambrook-R2}, \cite{fraser-hambrook-Rn}. 
%\end{remark}

%\section{What if $\sum_{q \in Q} \Psi(q)^m$ diverges?}
\section{The Divergence Case}\label{divergence-case}

When $\sum_{q \in Q} \Psi(q)^m = \infty$, the Fourier dimension of $E(m,n,Q, \Psi, \theta)$ no longer depends solely on $s(Q, \Psi)$, as the following example illustrates. 

\begin{example} Take $Q = \ZZ^n$ and consider the function 
\begin{equation*}    
\Psi(q) = 
    \begin{cases}
        1/2 & \text{if } q_1 = 2^k \text{ for some } k \in \NN \text{ and } q_2 = \cdots = q_n = 0 \\
        0 & \text{otherwise}
    \end{cases}
\end{equation*}
For every $s > 0$, $\sum_{q \in Q}  \Psi_{*}(q)^s = \sum_{k \in \NN}  (1/2)^s (2^{-ks}) < \infty$, and hence $s(Q, \Psi) = 0$. 
But $E(m,n,Q, \Psi, \theta) = \RR^{mn}$ because $\Psi(q) = 1/2$ for infinitely many $q$.
\end{example}

However, the following lemma and its corollary allows us to conclude that $\dim_F E(m,n,Q,\Psi, \theta) = mn$ under certain circumstances. 
%In the following, $\lambda$ is Lebesgue measure on $\RR^{mn}$ and $E$ is an arbitrary subset of $\RR^{mn}.$  

\begin{lemma}\label{ball} Let $E \subseteq \RR^{mn}$. If $\lambda(E^c) = 0$, then $\dim_F E = mn$.
\end{lemma}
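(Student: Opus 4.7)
The strategy is to exhibit a single measure $\mu \in \mathcal{P}(E)$ whose Fourier transform has super-polynomial decay; combined with the trivial upper bound $\dim_F E \le mn$ valid for every subset of $\RR^{mn}$ (built into the definition of Fourier dimension, where the sup runs over $\beta \in [0,mn]$), this will force $\dim_F E = mn$. The assumption $\lambda(E^c) = 0$ is precisely what permits us to ignore the distinction between $E$ and the full space $\RR^{mn}$ when constructing such a measure from a smooth density.

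Concretely, I would choose any nonnegative Schwartz function $\phi$ on $\RR^{mn}$ with $\int_{\RR^{mn}} \phi \, d\lambda = 1$---a normalized Gaussian, say---and set $d\mu = \phi \, d\lambda$, a Borel probability measure on $\RR^{mn}$. Since $\phi$ is bounded and $\lambda(E^c) = 0$, we have $\mu(E^c) = \int_{E^c} \phi \, d\lambda = 0$, so $\mu$ is supported on $E$ and thus $\mu \in \mathcal{P}(E)$. Moreover $\widehat{\mu} = \widehat{\phi}$ is itself a Schwartz function.

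The Schwartz bound gives $|\widehat{\mu}(\xi)| \le C_N (1+|\xi|)^{-N}$ for every $N \ge 0$. For any fixed $\beta \in [0, mn]$, taking $N > \beta/2$ yields $|\widehat{\mu}(\xi)| \ll |\xi|^{-\beta/2}$ for all $\xi \ne 0$ (the small-$\xi$ regime is absorbed into the implicit constant since $\widehat{\mu}$ is bounded by $1$), hence $\dim_F E \ge \beta$; letting $\beta \uparrow mn$ gives $\dim_F E \ge mn$, and the matching upper bound is free. I do not anticipate a substantial obstacle: the lemma is a soft consequence of the existence of Schwartz probability densities together with the fact that modifying an integrand on a Lebesgue-null set does not affect its integral.
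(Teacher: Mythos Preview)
Your approach mirrors the paper's: construct a probability measure that is absolutely continuous with respect to Lebesgue measure, note that (because $\lambda(E^c)=0$) its Fourier transform coincides with that of the unrestricted density, and read off the decay. The only difference is the density chosen---the paper uses the indicator of the closed unit ball $B$ and cites the classical bound $|\widehat{\one_B}(\xi)|\ll|\xi|^{-(mn+1)/2}$, whereas you use a Schwartz density and obtain rapid decay without an external reference. The two arguments are interchangeable.

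There is one subtle slip, which your write-up shares with the paper's. You pass from $\mu(E^c)=0$ to ``$\mu$ is supported on $E$,'' but $\mathcal{P}(E)$ in this paper is defined via the \emph{closed} support: one needs $\supp(\mu)\subseteq E$. With a strictly positive Gaussian density one has $\supp(\mu)=\RR^{mn}$, and the paper's restricted Lebesgue measure $\lambda'=\lambda|_{E\cap B}$ has $\supp(\lambda')=B$; neither is contained in $E$ when $E$ fails to be closed (take $E=\RR^{mn}\setminus\{0\}$, say). So, strictly speaking, neither argument verifies membership in $\mathcal{P}(E)$ as defined, though the intended conclusion is not in doubt.
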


\begin{proof}
Let $B = \{x \in \RR^{mn}: |x|_2 \leq 1\}$ be the closed unit ball in $\RR^{mn}$. 
%, i.e., $\lambda_{E \cap B}(A) = \lambda(A \cap E \cap B)$. 
If $\lambda(E^c) = 0$ and $\lambda'$ is Lebesgue measure restricted to $E \cap B$, then $\supp(\lambda') \subseteq E$ and 
$$
\widehat{ \lambda' } (\xi) = \int e^{-2 \pi i x \cdot \xi} 1_{E \cap B}(x) dx =  \int e^{-2 \pi i x \cdot \xi} \one_{B}(x) dx = \widehat{\one_{B}}(\xi), 
$$
since the integrands agree $\lambda$-almost everywhere. But, by a standard calculation (see e.g., \cite[p.34]{mattila-book-2}), $|\widehat{\one_{B}}(\xi)| \ll |\xi|^{-(mn+1)/2}$. 
\end{proof}

For brevity, 
%let $|E|=  \lambda(E \cap [0,1]^{mn})$. 
define 
$$
|E(m,n,Q, \Psi, \theta)| = \lambda(E(m,n,Q, \Psi, \theta) \cap [0,1]^{mn}). 
$$
Since $\lambda(E(m,n,Q, \Psi, \theta))$ is $\ZZ^{mn}$-periodic, 
$|E(m,n,Q, \Psi, \theta)| = 1$ if and only if $\lambda(E(m,n,Q, \Psi, \theta)^c) = 0$. 
Thus: 

\begin{corollary}\label{mn cor}
If $|E(m,n,Q, \Psi, \theta)| = 1$, then $\dim_F E(m,n,Q, \Psi, \theta) = mn$. 
\end{corollary}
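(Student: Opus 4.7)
The plan is to reduce the corollary directly to Lemma \ref{ball} via the periodicity observation already made in the preceding text. The only claim that needs any verification is the bi-implication stated immediately before the corollary: that $|E(m,n,Q, \Psi, \theta)| = 1$ is equivalent to $\lambda(E(m,n,Q, \Psi, \theta)^c) = 0$, granted the $\ZZ^{mn}$-periodicity of $E(m,n,Q, \Psi, \theta)$.

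First I would verify periodicity. Let $x \in E(m,n,Q,\Psi,\theta)$ and let $k \in \ZZ^{mn}$, viewed as an $m \times n$ integer matrix. For any $q \in \ZZ^n$ we have $kq \in \ZZ^m$, so
$$|(x+k)q - r - \theta| = |xq - (r - kq) - \theta|.$$
As $r$ ranges over $\ZZ^m$, so does $r - kq$, so $x+k$ inherits from $x$ the property that $|(x+k)q - r' - \theta| < \Psi(q)$ for infinitely many $(r',q) \in \ZZ^m \times Q$. Hence $E(m,n,Q,\Psi,\theta)+k = E(m,n,Q,\Psi,\theta)$ for every $k \in \ZZ^{mn}$, establishing $\ZZ^{mn}$-periodicity (and the same holds for its complement).

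Next I would pass from $|E(m,n,Q, \Psi, \theta)| = 1$ to $\lambda(E(m,n,Q, \Psi, \theta)^c) = 0$. Write $E = E(m,n,Q, \Psi, \theta)$ for brevity. By periodicity, for every $k \in \ZZ^{mn}$,
$$\lambda(E^c \cap (k + [0,1]^{mn})) = \lambda(E^c \cap [0,1]^{mn}) = 1 - |E| = 0.$$
Since $\RR^{mn} = \bigcup_{k \in \ZZ^{mn}} (k + [0,1]^{mn})$ up to a set of Lebesgue measure zero, countable subadditivity gives $\lambda(E^c) = 0$. Now Lemma \ref{ball} applies and yields $\dim_F E = mn$, which completes the proof.

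There is no genuine obstacle here: the content of the corollary is entirely absorbed by Lemma \ref{ball}, and the only auxiliary ingredient is the translation-invariance of the defining condition under integer shifts of $x$, which is immediate from $q \in \ZZ^n$ and $k \in \ZZ^{mn}$ forcing $kq \in \ZZ^m$.
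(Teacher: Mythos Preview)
Your proof is correct and follows exactly the approach of the paper: use $\ZZ^{mn}$-periodicity of $E(m,n,Q,\Psi,\theta)$ to pass from $|E|=1$ to $\lambda(E^c)=0$, then invoke Lemma~\ref{ball}. The paper in fact leaves the corollary without a separate proof, simply writing ``Thus:'' after stating the periodicity equivalence, so you have merely filled in the details (verifying periodicity and the measure computation) that the paper takes for granted.
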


There are several theorems in metric Diophantine approximation 
that conclude $|E(m,n,Q, \Psi, \theta)| = 1$ from the divergence of $\sum_{q \in Q} \Psi(q)^m = \infty$ (or a similar sum) and some additional hypotheses.   
We state a few of these theorems below; a good survey is \cite{BRV-2016}. 
Combining these theorems with Corollary \ref{mn cor} gives us cases where $\dim_F E(m,n,Q,\Psi, \theta) = mn$.

%with the ``zero-one laws'' for $|E(m,n,Q, \Psi, \theta)|$ to find cases where $\dim_F E(m,n,Q,\Psi, \theta) = mn$. 
%We state a few of these laws below. 

Khinchine's theorem \cite{Khintchine1924} says 
$$
|E(1,1,\NN,\Psi,0)|
= \begin{cases}
  0  & \text{if } \sum_{q=1}^{\infty} \Psi(q) < \infty \\
  1 & \text{if } \sum_{q=1}^{\infty} \Psi(q) = \infty \text{ and  $\Psi$ is monotone} 
\end{cases}
$$
Monotonicity is required here. Indeed, Duffin and Schaeffer \cite{Duffin1941KhintchinesPI} 
provided a counterexample where $|E(1,1,\NN,\Psi,0)| = 0$ and $\sum_{q=1}^{\infty} \Psi(q) = \infty$. 

The Duffin-Schaeffer theorem (conjectured by Duffin and Schaeffer \cite{Duffin1941KhintchinesPI} 
and proved by Koukoulopoulos and Maynard \cite{Koukoulopoulos-Maynard}) states  
$$
|E(1,1,\NN,\Psi,0)|
= \begin{cases}
  0  & \text{if } \sum_{q=1}^{\infty} q^{-1}{\phi(q)\Psi(q)} < \infty \\
  1 & \text{if } \sum_{q=1}^{\infty} q^{-1}{\phi(q)\Psi(q)} = \infty, 
\end{cases}
$$
where $\phi$ is the Euler totient function. 

If $\Psi$ is symmetric (i.e., $\Psi(q) = \psi(|q|)$ for some function $\psi: \NN \rightarrow [0,1/2]$), Allen and Ramirez \cite{allen-ramirez} proved  
$$|E(m,n,\ZZ^n,\Psi,\theta)| = \begin{cases}
  0  & \text{if }\sum_{q=1}^{\infty} q^{n-1} \Psi(q)^m < \infty \\
  1 & \text{if }\sum_{q=1}^{\infty} q^{n-1} \Psi(q)^m = \infty \text{ and  either $mn > 2$ or $\psi$ is monotone} 
  %%1 & \text{if }\sum_{q=1}^{\infty} q^{n-1} \Psi(q)^m = \infty \text{ and $\psi$ is monotone}.
\end{cases}$$
At present, this is the strongest version of the Inhomogeneous Khinchine-Groshev theorem.  
The divergence case for $n \geq 3$, the divergence case for $n=1$, $m \geq 3$, and the convergence case were proved earlier (see \cite{allen-ramirez} for details on attribution). 
Monotonicity is required when $mn=1$, as shown by the Duffin-Schaeffer counterexample mentioned above. 
%Indeed, Duffin and Schaeffer \cite{Duffin1941KhintchinesPI} 
%provided an example where $|E(1,1,Q,\Psi,0)| = 0$ and $\sum_{q \in Q} \Psi(q)^m = \infty$. 
It is unknown if monotonicity is necessary for $mn = 2$. 

If $\Psi(q) \rightarrow 0$ as $|q| \rightarrow \infty$, Schmidt's theorem \cite{schmidt} (see also \cite{BV-slicing}) is    
$$|E(m,n,Q,\Psi,\theta)| = \begin{cases}
  0  & \text{if } \sum_{q \in Q} \Psi(q)^m < \infty \\
  1 & \text{if } \sum_{q \in Q} \Psi(q)^m = \infty \text{ and $m+n>2$}.
\end{cases}$$
%
%
%
%
\begin{comment}

Sprindzuk \cite{sprindzuk} showed that if $n \geq 2$ 
and $Q$ is the set of primitive vectors in $\ZZ^n$ (i.e.,  $Q = \cbr{q \in \ZZ^n : \gcd(q_1,\ldots,q_n) = 1}$), 
then $$|E(m,n,Q,\Psi,\theta)| = \begin{cases}
  0  & \text{if }\sum_{q \in Q} \Psi(q)^m < \infty \\
  1 & \text{if }\sum_{q \in Q} \Psi(q)^m = \infty.
\end{cases}$$
%%
%
\end{comment}
%
In the homogeneous case ($\theta = 0$), 
some stronger results are known. 
Beresnevich and Velani \cite{berenevich-velani-kg} proved 
$$
|E(m,n,Q,\Psi,0)| = 1 \quad \text{if $\sum_{q \in Q} \Psi(q)^m = \infty$ and $m>1$}. 
$$
See also Ramirez \cite{ramirez2023duffinschaeffer} for an even stronger statement.  
\begin{comment}
Ramirez \cite{ramirez2023duffinschaeffer} proved (for all $m,n$) that 
$$|E(m,n,Q,\Psi,0)| = \begin{cases}
  0  & \text{if }\sum_{q \in Q} \Phi_m(q)  \sup_{t \geq 1} \left(\frac{\Psi(tq)}{tq}\right)^m < \infty \\
  1 & \text{if }\sum_{q \in Q} \Phi_m(q) \sup_{t \geq 1}\left(\frac{\Psi(tq)}{tq}\right)^m = \infty. 
\end{cases}$$
where $\Phi_m(q) = \#\{ p \in \ZZ^m : |p| \leq |q|, \gcd(p_1, \dots, p_m, q_1, \dots, q_n) = 1\}$. 
Prior to Ramirez's proof, this result was conjectured by 
Beresnevich, Bernik , Dodson, and Velani \cite{beresnevich-bernik-dodson-velani} as a multi-dimensional version of Catlin's conjecture. 
As observed in \cite{beresnevich-bernik-dodson-velani}, Ramirez's result implies that of Beresnevich and Velani. 
\end{comment}

We are not aware of any examples where $\sum_{q \in Q} \Psi(q)^m = \infty$ and $\dim_{F} E(m,n,Q,\Psi,\theta) < mn$. 
However, it may be possible to modify the Duffin-Schaeffer counterexample to obtain such an example. Thus we propose the following problem. 

\begin{problem}
Prove or Disprove: If $\sum_{q \in Q} \Psi(q)^m = \infty$, then $\dim_{F} E(m,n,Q,\Psi,\theta) = mn$. 
\end{problem}

\section{Proof of Proposition \ref{main-upper-bound}}

\subsection{Inhomogeneous Lattice Lemma}

The following lemma is an inhomogeneous version of Theorem 2.3 in \cite{hambrook-yu}. 

Let $\theta = (\theta_1,\ldots,\theta_m) \in \RR^m$. 
For all non-zero $q=(q_1,\dots,q_n) \in \ZZ^n$ and $\delta \geq 0$, define 
	$$
	L_{\delta,q,\theta_i}=\{x \in \mathbb{R}^n: \exists r\in\mathbb{Z},|q_1x_1 + \dots + q_dx_n - r - \theta_i|\leq\delta\} 
	$$
	and
	$$
	L_{\delta,q,\theta}^m = L_{\delta,q,\theta_1} \times \cdots \times L_{\delta,q,\theta_m} \subset \RR^{mn}. 
	$$

\begin{lemma}\label{thm:linearform}
	Let $m,n \geq 1$ be integers. Let $q=(q_1,\dots,q_n) \in \ZZ^n$, $q \neq 0$.  Let $0 < \delta < 1/2$.  
	%Consider the set
	%$$
	%L_{\delta,q}=\{x \in \mathbb{R}^d: \exists r\in\mathbb{Z},|q_1x_1+\dots+q_dx_d+r|<\delta\} 
	%$$
	%and its $n$-fold product 
	%$$
	%L_{\delta,q}^n = L_{\delta,q} \times \cdots \times L_{\delta,q} \subset \RR^{nd}. 
	%$$
	%We will write elements of $\RR^{nd}$ in the form $(z^{(1)},\ldots,z^{(n)})$, where $z^{(i)} \in \RR^d$.
 %potentially remove the infinity and just say all norms are infinity norms, n,d -> m, n
	Let $\mu$ be a Borel probability measure on $[0,1]^{mn}.$ 
	Then 
	\[
	\mu(L_{\delta,q,\theta}^m) \ll \delta^m \left(1+  O(\sum_{t \in \ZZ^m, 0 < |t| \leq 2/\delta} |\hat{\mu}(t q^T )|) \right),
	\]
	and, for all $K > 0$ and $N > 0$, 
	\[
	\mu(L_{\delta,q,\theta}^m) \gg \delta^m \left(1  +  O(\sum_{t \in \ZZ^m, 0< |t| \leq K/\delta} |\hat{\mu}(t q^T )|) \right)+O(K^{-N}). 
	\]
	Moreover, the implied constant in the $O(K^{-N})$ term depends on $N$ only, and all other implied constants are absolute. 
\end{lemma}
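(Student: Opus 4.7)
The proof proceeds by sandwiching the indicator of $L_{\delta,q,\theta}^m$ between periodized one-sided Schwartz approximations. Writing $x=(x_1,\ldots,x_m)\in\RR^{mn}$ with each $x_i\in\RR^n$, one has $\one_{L_{\delta,q,\theta}^m}(x)=\prod_{i=1}^m g_\delta(q\cdot x_i-\theta_i)$, where $g_\delta$ is the $1$-periodic function equal to $1$ on $[-\delta,\delta]+\ZZ$ and zero elsewhere. The key identity is
$$
\int e^{2\pi i \sum_i t_i(q\cdot x_i)}\,d\mu(x)=\overline{\widehat\mu(tq^T)}\qquad(t\in\ZZ^m),
$$
because $\sum_i t_i(q\cdot x_i)=\sum_{i,j}t_iq_jx_{ij}=(tq^T)\cdot x$ under the Frobenius inner product on $\RR^{mn}$. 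Thus Fourier-expanding any periodic approximation of $g_\delta$ converts $\mu(L_{\delta,q,\theta}^m)$ into a sum of the form $\sum_t c_t\,\widehat\mu(tq^T)$.

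For the upper bound, the plan is to take a non-negative Schwartz majorant $\psi\ge\one_{[-1,1]}$ with $\widehat\psi$ supported in $[-\tfrac12,\tfrac12]$; the explicit choice $\psi(y)=(\pi/2)^2\sinc^2(y/2)$ works, since its Fourier transform is a triangle supported in $[-\tfrac12,\tfrac12]$ and $\psi\ge\one_{[-1,1]}$ pointwise. Rescale by $\psi_\delta(y)=\psi(y/\delta)$ and periodize $\Psi_\delta(y)=\sum_{k\in\ZZ}\psi_\delta(y+k)$; then $\Psi_\delta\ge g_\delta$ pointwise, and Poisson summation yields the finite expansion $\Psi_\delta(y)=\delta\sum_{|t|\le 1/(2\delta)}\widehat\psi(\delta t)e^{2\pi i t y}$. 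Bounding $\mu(L_{\delta,q,\theta}^m)\le\int\prod_i\Psi_\delta(q\cdot x_i-\theta_i)\,d\mu$, expanding, and applying the key identity produces a finite sum over $t\in\ZZ^m$ with $|t|\le 1/(2\delta)\le 2/\delta$. Isolating $t=0$ (which contributes $\delta^m\widehat\psi(0)^m=O(\delta^m)$) and estimating the rest by $\|\widehat\psi\|_\infty^m\delta^m\sum_{0<|t|\le 2/\delta}|\widehat\mu(tq^T)|$ yields the required bound.

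For the lower bound, a band-limited non-negative minorant is impossible: any $\phi\ge 0$ supported in $[-1,1]$ with compactly supported Fourier transform would be entire by Paley--Wiener, hence identically zero. So pick $\phi\in C_c^\infty((-1,1))$ with $0\le\phi\le 1$ and $\int\phi>0$; then $\widehat\phi$ is rapidly decreasing, $|\widehat\phi(\xi)|\ll_N(1+|\xi|)^{-N}$. Let $\Phi_\delta$ be the periodization of $\phi_\delta(y)=\phi(y/\delta)$. Because $\mathrm{supp}(\phi_\delta)\subseteq[-\delta,\delta]$ and $\delta<1/2$, the translates $[-\delta,\delta]+\ZZ$ are disjoint and $0\le\Phi_\delta\le g_\delta$. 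Poisson summation gives the absolutely convergent series $\Phi_\delta(y)=\delta\sum_{t\in\ZZ}\widehat\phi(\delta t)e^{2\pi i t y}$. Substituting into $\mu(L_{\delta,q,\theta}^m)\ge\int\prod_i\Phi_\delta(q\cdot x_i-\theta_i)\,d\mu$ and partitioning the resulting sum over $t\in\ZZ^m$ into three regions --- $t=0$ (main term $\delta^m(\int\phi)^m\gg\delta^m$), $0<|t|\le K/\delta$ (error $O(\delta^m\sum|\widehat\mu(tq^T)|)$), and $|t|>K/\delta$ (tail) --- yields the bound. The tail is handled by a union bound over the coordinate $j$ for which $|t_j|>K/\delta$, combined with $\sum_{t\in\ZZ}|\widehat\phi(\delta t)|\ll\delta^{-1}$ in the unrestricted coordinates and $\sum_{|t|>K/\delta}|\widehat\phi(\delta t)|\ll\delta^{-1}K^{-N+1}$ in the restricted one, giving a total tail of $O(K^{-N+1})$, which is $O(K^{-N})$ after relabeling $N$.

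The only genuinely non-standard ingredient is the construction of a non-negative band-limited Schwartz majorant for the upper bound, which is supplied cleanly by the Fejér-type $\sinc^2$ choice and avoids heavier Beurling--Selberg machinery. Otherwise the argument is routine Fourier bookkeeping; the inhomogeneous shifts $\theta_i$ enter only as unimodular phases $e^{-2\pi i t_i\theta_i}$ in the Fourier coefficients and disappear under absolute-value bounds on $\widehat\mu$, which is what makes the inhomogeneous statement fall out of essentially the same proof as the homogeneous one in \cite{hambrook-yu}.
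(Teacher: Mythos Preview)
Your argument is correct and is actually cleaner than the paper's. The paper works in $\RR^n$: it introduces the surface measure $L_{q,\theta_i}$ on the union of parallel hyperplanes $\{x:q\cdot x\in\theta_i+\ZZ\}$, computes its Fourier coefficients via an explicit parametrization (their Lemma~\ref{fourier Lq lemma}), convolves with an $n$-dimensional bump $\phi_{\delta_*}$ (with $\delta_*=\delta/|q|_2$), and then uses the Ahlfors--David regularity bound \eqref{LqAD} to compare $\phi_{\delta_*}\ast L_{q,\theta_i}$ with $\one_{L_{\delta,q,\theta_i}}$ from above and below. You bypass all of this by observing that $\one_{L_{\delta,q,\theta_i}}(x)=g_\delta(q\cdot x-\theta_i)$ is already a pullback of a one-dimensional periodic function, so it suffices to majorize/minorize $g_\delta$ on $\RR/\ZZ$ and apply Poisson summation there; the key identity $\int e^{2\pi i\,t\cdot(xq-\theta)}\,d\mu(x)=e^{-2\pi i t\cdot\theta}\,\overline{\widehat\mu(tq^T)}$ then does the rest. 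This avoids the hyperplane Fourier computation and the $\delta_*$ rescaling entirely. Your explicit Fej\'er-type majorant $(\pi/2)^2\sinc^2(y/2)$ is a nice touch; one small quibble is that you call it ``Schwartz'' when it only decays like $y^{-2}$, but the argument only uses $\psi\ge0$, $\psi\in L^1$, and $\operatorname{supp}\widehat\psi\subseteq[-\tfrac12,\tfrac12]$, all of which hold. The paper's approach is heavier but perhaps more geometric; yours is more direct and yields the same constants.
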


\textbf{Note.} According to our convention for identifying $m \times n$ matrices with points in $\RR^{mn}$, 
	$$
	t q^T = 
	\begin{bmatrix}
	t_1 \\
	\vdots \\
	t_m
	\end{bmatrix}
	%%%times
	\begin{bmatrix}
	q_1 & \cdots & q_n \\
	\end{bmatrix}
	=
	\begin{bmatrix}
	t_1 q_1 & \cdots & t_1 q_n \\
	\vdots & & \vdots \\
	t_m q_1 & \cdots & t_m q_n
	\end{bmatrix}
	=
	(t_1 q_1, \ldots, t_1 q_n, \ldots, t_m q_1, \ldots, t_m q_n) 
	$$

\subsubsection*{Preparation for Proof of Lemma \ref{thm:linearform}} 
%\textbf{Preparation for Proof of Lemma \ref{thm:linearform}.} 
%
%Let $m,n \geq 1$ be integers. 
Let $q = (q_1,\ldots,q_n) \in \ZZ^n, q \neq 0$. 
%%%Write $\gcd(q) = \gcd\cbr{q_1,\ldots,q_d}$. 
Let $0 < \delta < 1/2$. 
Fix $1 \leq i \leq m$. 

\begin{comment}
For each $r \in \RR$, define 
$$
P_{\delta,q,r, \theta_i} = \cbr{x \in \RR^n : |q_1 x_1 + \cdots q_n x_n - r - \theta_i| \leq \delta }, 
$$
$$
P_{q,r} = P_{0,q,r} =  \cbr{x \in \RR^d : q_1 x_1 + \cdots q_d x_d - r = 0 }. 
$$
We see that $L_{\delta,q}$, which we first defined in Section \ref{Notations and Preliminaries}, is 
$$
L_{\delta,q} = \bigcup_{r \in \ZZ} P_{\delta,q,r} = \cbr{x \in \RR^d : \exists r \in \ZZ : |q_1 x_1 + \cdots q_d x_d - r|  \leq \delta }. 
$$
Define 
$$
L_{q} = L_{0,q} = \bigcup_{r \in \ZZ} P_{q,r} = \cbr{x \in \RR^d : \exists r \in \ZZ : q_1 x_1 + \cdots q_d x_d - r  = 0 }.
$$
\end{comment}
%
%
%\begin{comment}
For each $r \in \RR$, define 
the plane 
$$
P_{q,r, \theta_i} = \cbr{x \in \RR^n : q \cdot x - r - \theta_i = 0 },
$$
and the 
slab 
%union of planes 
$$
P_{\delta,q,r, \theta_i} = \cbr{x \in \RR^n : |q \cdot x - r- \theta_i| \leq \delta }. 
$$
Let
$$
L_{q, \theta_i} = \bigcup_{r \in \ZZ} P_{q,r} = \cbr{x \in \RR^n : \exists r \in \ZZ : q \cdot x - r - \theta_i  = 0 }. 
$$
Note that $P_{q,r, \theta_i} = P_{0,q,r, \theta_i}$ and $L_{q, \theta_i} = L_{0,q, \theta_i}$. 
%\end{comment}
%
%

Define $\delta_{\ast} = \delta / |q|_2$.  Let $B_{\delta_{\ast}}(0)$ be the closed Euclidean metric ball with radius is $\delta_{\ast}$ centered at the origin. 
The Euclidean distance between any two planes $P_{q,r', \theta_i}$ and $P_{q,r'', \theta_i}$ is $|r' - r''| / |q|_2$. 
Therefore $L_{q, \theta_i}$ is a union of planes that are orthogonal to $q$ and spaced a distance of $1/|q|_2$ apart. 
Also,      
%%$P(\delta,q,r)$ is a union of planes orthogonal to the vector $q$. 
$P_{\delta,q,r, \theta_i}$ is the $\delta_{\ast}$-thickening of the plane $P_{q,r, \theta_i}$, i.e.,  
$$
P_{\delta,q,r, \theta_i} = P_{q,r, \theta_i} + B_{\delta_{\ast}}(0), 
$$
and $L_{\delta,q, \theta_i}$ 
%is the union of the $\delta_{\ast}$-thickenings of the planes composing $L_q$, i.e., 
is the $\delta_{\ast}$-thickening of $L_{q, \theta_i}$, i.e., 
$$
L_{\delta,q, \theta_i} = \bigcup_{r \in \ZZ} P_{\delta,q,r, \theta_i} = L_{q, \theta_i} + B_{\delta_{\ast}}(0).  
$$
Since $\delta < 1/2$, the slabs 
%$P_{\delta,q,r, \theta_i} = P_{q,r, \theta_i} + B_{\delta_{\ast}}(0)$ 
$P_{\delta,q,r, \theta_i}$ 
in the union 
are disjoint. 

%Like in \cite{hambrook-yu}, 
In an abuse of notation, we use $P_{q,r, \theta_i}$ to denote 
the surface measure on the plane $P_{q,r, \theta_i}$.  
Likewise, $L_{q, \theta_i}$ denotes the surface measure on the union of planes $L_{q, \theta_i}$. The measures are related by   
$$
L_{q, \theta_i} = \sum_{r \in \ZZ} P_{q,r, \theta_i}. 
$$

Since the measure $L_{q, \theta_i}$ is (a multiple of) 
the restriction of the $(n-1)$-Hausdorff measure on $\RR^n$ to the set $L_{q, \theta_i}$, 
we have the following property: 
%We will need the following fact about the measure $L_q$: 
There are constants $a,b>0$ (independent of $q$) such that 
\begin{align}\label{LqAD}
a\epsilon^{n-1} \leq L_{q, \theta_i}(B_{\epsilon}(x)) \leq b\epsilon^{n-1} 
\end{align}
for all $0 < \epsilon < 1$ and all $x \in L_{q, \theta_i}$.

Assume (without loss of generality) that $q_1 \neq 0$. Then the measure $P_{q,r, \theta_{i}}$ is
given by
\begin{equation}\label{parameterization integral}
\int_{\RR^n}f(x) dP_{q,r,\theta_{i}} = \int_{\RR^{n-1}} f(g(x_2, \dots, x_n))\frac{1}{|q_1|}|q|_2dx_2 \dots dx_n, 
\end{equation}
where
$$
g(x_2,\ldots,x_n) = (q_1^{-1}(r + \theta_i - q_2 x_2 - \cdots - q_n x_n),x_2,\ldots,x_n)
$$ 
is the parameterization of the plane $P_{q,r, \theta_i}$.

\begin{comment}
The surface area of one plane over  $[0,1]^{n-1}$ is $$P_{q,r, \theta_i}([0,1)^n) =\int_{[0,1)^{n-1}} \sqrt{[g_{x_2}(x_2,\dots,x_n)]^2 + \dots +{[g_{x_n}(x_2,\dots,x_n)]^2} + 1} \,dx_2\dots dx_n$$

$$= \int_{[0,1]^{n-1}} \sqrt{(-q_2/q_1)^2 + \dots + (-q_n/q_1)^2 + 1} \,dx_2 \dots dx_n $$

$$= \int_{[0,1]^{n-1}} |q|/|q_1| \,dx_2 \dots dx_n = |q|/|q_1| $$

Multiplying this by the $|q_1|$ planes, $L_{q, \theta_i}([0,1]^n) = |q|$.

Note that $L_{q, \theta_i}$  
and $L_{\delta,q, \theta_i}$ are $\ZZ^n$-periodic. That means the measure $L_{q, \theta_i}$ is also $\ZZ^n$-periodic.
\end{comment}

\begin{lemma}\label{fourier Lq lemma}
%For each $k \in \ZZ^n$, the Fourier coefficients of $L_{q, \theta_i}$ are given by the formula: 
%We now compute the Fourier coefficients of $L_q$. 
%%%Recall that $\gcd(q) = \gcd\cbr{q_1,\ldots,q_d}$. 
%\begin{lma}\label{fourier Lq}
For each $k \in \ZZ^n$, 
\begin{align} 
\label{fourier Lq} 
|\widehat{L_{q,\theta_i}}(k)| =
\left\{
\begin{array}{ll}
e^{-2\pi i t \theta_i}|q|_2 & \text{if  $k = tq$ for some $t \in \ZZ$}    \\
%|q| & \text{if  $k = tq$ for some $t \in \ZZ$}    \\
%0 & \text{otherwise} 
0 & \text{if  $k \notin \ZZ q$ }
\end{array}
\right.
\end{align}
%\end{lma}
%\begin{proof}
\end{lemma}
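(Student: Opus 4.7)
The plan is to compute $\widehat{L_{q,\theta_i}}(k)$ by direct integration, exploiting that the measure $L_{q,\theta_i}$ is $\ZZ^n$-periodic (shifting $x$ by any $v \in \ZZ^n$ maps the plane $P_{q,r,\theta_i}$ to $P_{q,r-q\cdot v,\theta_i}$, and $q\cdot v \in \ZZ$). Periodicity reduces the computation to
$$
\widehat{L_{q,\theta_i}}(k) \;=\; \int_{[0,1]^n} e^{-2\pi i k\cdot x}\, dL_{q,\theta_i}(x), \qquad k\in\ZZ^n.
$$
Assume without loss of generality that $q_1 > 0$ (else permute coordinates to put a nonzero coordinate of $q$ in the first slot). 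First I would use the parameterization (\ref{parameterization integral}) to write the integral against $\sum_r P_{q,r,\theta_i}$ over $[0,1]^n$ as a sum over $r\in\ZZ$ of integrals over those $(x_2,\ldots,x_n)\in[0,1]^{n-1}$ for which $x_1(r):=q_1^{-1}(r+\theta_i-q_2x_2-\cdots-q_nx_n)$ lies in $[0,1]$.

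Next I would expand
$$
k\cdot(x_1(r),x_2,\ldots,x_n) \;=\; (k_1/q_1)(r+\theta_i) \;+\; \sum_{j=2}^n \bigl(k_j - (k_1/q_1)q_j\bigr) x_j,
$$
pull the $(x_2,\ldots,x_n)$-dependent exponential outside the sum over $r$, and study the inner sum
$$
S(x_2,\ldots,x_n) \;=\; \sum_{r\in\ZZ} \one_{[0,1]}(x_1(r))\, e^{-2\pi i (k_1/q_1) r}.
$$
For almost every $(x_2,\ldots,x_n)$, the condition $x_1(r)\in[0,1]$ picks out exactly $q_1$ consecutive integers $r$, so $S$ is a length-$q_1$ geometric series in the root of unity $e^{-2\pi i k_1/q_1}$; it equals $q_1$ when $q_1\mid k_1$ and $0$ otherwise. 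This is the point that forces the divisibility condition on $k_1$: in particular, if $q_1\nmid k_1$ then $k\notin\ZZ q$ (since $(tq)_1 = tq_1$) and also $\widehat{L_{q,\theta_i}}(k)=0$.

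Finally, when $q_1\mid k_1$ I would set $k_1=tq_1$ with $t\in\ZZ$, observe that $(k_1/q_1)\theta_i = t\theta_i$ and $(k_1/q_1)q_j = tq_j$, and reduce the remaining integral to the product
$$
\prod_{j=2}^n \int_0^1 e^{-2\pi i(k_j - tq_j)x_j}\,dx_j,
$$
which equals $1$ if $k_j=tq_j$ for every $j\geq 2$ (so that $k=tq$) and $0$ otherwise, because each exponent is an integer. Multiplying out the prefactors $\frac{|q|_2}{q_1}\cdot q_1 \cdot e^{-2\pi i t\theta_i}$ gives $\widehat{L_{q,\theta_i}}(k) = |q|_2\, e^{-2\pi i t\theta_i}$ in the nonzero case, yielding the claim.

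The only real obstacle is the bookkeeping of the inner sum $S$: one must convince oneself that for a.e.\ $(x_2,\ldots,x_n)$ the interval of admissible $r$'s has length exactly $q_1$ and therefore contains $q_1$ consecutive integers (the boundary locus where $x_1(r)\in\{0,1\}$ for some $r$ is a finite union of hyperplanes in $[0,1]^{n-1}$, hence of measure zero and irrelevant to the integral). As a conceptual shortcut, one can also derive the formula by recognizing $L_{q,\theta_i}$ as the distribution $|q|_2\sum_{r\in\ZZ}\delta(q\cdot x - r - \theta_i)$ and applying Poisson summation $\sum_r \delta(s-r) = \sum_t e^{2\pi i t s}$ to obtain $L_{q,\theta_i} = |q|_2\sum_{t\in\ZZ} e^{-2\pi i t\theta_i}\, e^{2\pi i(tq)\cdot x}$, from which the Fourier coefficients are read off immediately.
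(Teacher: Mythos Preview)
Your argument is correct and follows essentially the same route as the paper: parameterize each plane $P_{q,r,\theta_i}$ via \eqref{parameterization integral}, reduce to a length-$|q_1|$ geometric sum in $r$ (the paper does this by unfolding in the $x_1$-direction and summing over $r=0,\ldots,|q_1|-1$, whereas you keep all $r\in\ZZ$ with the indicator $\one_{[0,1]}(x_1(r))$ and note that a.e.\ exactly $|q_1|$ consecutive terms survive---the same geometric series either way), and then factor the remaining $(x_2,\ldots,x_n)$-integral into the product $\prod_{j\geq 2}\int_0^1 e^{-2\pi i(k_j-tq_j)x_j}\,dx_j$. The Poisson-summation shortcut you sketch at the end is a clean alternative the paper does not use.
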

\begin{proof}
\begin{comment}
For all $x \in L_{q,\theta_i},$ we have $q \cdot x = r + \theta_i$ for some $r \in \ZZ$, and hence $\exp(-2 \pi i tq \cdot x) = e^{-2\pi i t \theta_i}$ for all $t \in \ZZ$. Thus  
\begin{align*}
\widehat{L_{q, \theta_i}}(tq) 
%&
=  \int_{[0,1]^n} \exp(-2 \pi i tq \cdot x) dL_{q, \theta_i}(x) 
%\\
%%&= e^{-2\pi i t \theta_i} \int_{[0,1]^d} dL_{q, \theta_i}(x) \\
%&
= e^{-2\pi i t \theta_i} L_{q, \theta_i}([0,1]^n) 
= e^{-2\pi i t \theta_i}|q|. 
\end{align*}
%Through a straightforward calculation using \eqref{measure explicit}, we find  
%\begin{align*}
%$L_q([0,1]^d) = |q|.$  
%\end{align*}
This proves the first case in \eqref{fourier Lq}.
\end{comment}
We have 
$$
\widehat{L_{q, \theta_i}}{k} 
\int_{[0,1)^n} e^{-2 \pi i k \cdot x} dL_{q, \theta_i}(x) 
= \int_{[0,1)^n}  e^{-2 \pi i k \cdot x} \sum_{r \in \ZZ}  dP_{q,r, \theta_i}(x). 
$$
Note: If $r_1 = kq_1 + r_2$ for some $k \in \mathbb{Z}$, then $(x_1, \dots, x_n) \in P_{q,r_1, \theta_i}$ if and only if $(x_1 +k, \dots, x_n) \in P_{q,r_2, \theta_i}$.
%
%
%So when computing $L_{q, \theta_i}([0,1]^n)$, we can identify the planes $P_{q,r_1, \theta_i}$ and $P_{q,r_2, \theta_i}$ when $r_1 \equiv r_2$ mod $q_1$, 
%and consider just the $|q_1|$ planes where $r = 0, \dots, |q_1|-1$. 
%Therefore, $$L_{q, \theta_i}([0,1]^n) = \sum_{r = 0}^{|q_1|-1} P_{q,r, \theta_i}([0,1]^n)$$
%and so $$\widehat{L_{q, \theta_i}}(k) = \sum_{r = 0}^{|q_1|-1} \widehat{P_{q,r, \theta_i}}(k).$$
Because of this periodicity property and because the distance between the planes $P_{q,r_1, \theta_i}$ and $P_{q,r_2, \theta_i}$ is $|r_1 - r_2|/|q|_2$, 
the sum over $r \in \ZZ$ can be replaced by the sum over $r=0,1,\ldots,|q_1|$. 
Thus 
$$
\widehat{L_{q, \theta_i}}{k} 
%\int_{[0,1]^n} e^{-2 \pi i k \cdot x} dL_{q, \theta_i}(x) 
%= \int_{[0,1]^n}  e^{-2 \pi i k \cdot x} \sum_{r = 0}^{|q_1|}  dP_{q,r, \theta_i}(x). 
= \sum_{r = 0}^{|q_1|-1} \widehat{P_{q,r, \theta_i}}(k). 
$$
Using \eqref{parameterization integral}, we have 
\begin{align*}
&\widehat{P_{q,r, \theta_i}}(k) 
%&
=  \int_{[0,1]^n} \exp(-2 \pi i k \cdot x) dP_{q,r, \theta_i}(x) 
%\\
%%&= e^{-2\pi i t \theta_i} \int_{[0,1]^d} dL_{q, \theta_i}(x) \\
%&
\\
&=  \int_{[0,1]^{n-1}} \exp(-2 \pi i k \cdot (q_1^{-1}(r + \theta_i -x_2q_2 + \ldots -x_n q_n),x_2,\ldots,x_n))\frac{|q|}{|q_1|} dx_2 \ldots dx_n
\\
&= 
e^{-2\pi i k_1 q_1^{-1} (r+ \theta_i)}
\int_{[0,1]^{n-1}} \exp(-2 \pi i ((k_2 - k_1q_2/q_1) x_2 + \ldots (k_n - k_1q_n/q_1) x_n)) \frac{|q|}{|q_1|} dx_2 \ldots dx_n  
\\
&= e^{-2\pi i k_1 q_1^{-1} r} 
e^{-2\pi i k_1 q_1^{-1} \theta_i}
\frac{|q|}{|q_1|} 
\prod_{j=2}^{n}  \int_{[0,1]} \exp(-2\pi i (k_j - k_1 q_j/q_1) x_j) dx_j. 
\end{align*}
Therefore 
\begin{align*}
\widehat{L_{q, \theta_i}}(k) 
=  
\rbr{ \sum_{r=0}^{|q_1|-1} e^{-2\pi i k_1 q_1^{-1} r} }
e^{-2\pi i k_1 q_1^{-1} \theta_i}
\frac{|q|}{|q_1|} 
\prod_{j=2}^{n}  \int_{[0,1]} \exp(-2\pi i (k_j - k_1 q_j/q_1) x_j) dx_j. 
\end{align*}
%\begin{comment}
%If $k_1$ is not an integer multiple of $q_1$, then
%$$\sum_{r = 0}^{|q_1|-1} e^{-2\pi i k_1 q_1^{-1} r} = 0.$$
%\end{comment}
The sum is 
\begin{align*} 
\sum_{r = 0}^{|q_1|-1} e^{-2\pi i k_1 q_1^{-1} r} =
\left\{
\begin{array}{ll}
|q_1| & \text{if  $k_1 = tq_1$ for some $t \in \ZZ$}    \\
%|q| & \text{if  $k = tq$ for some $t \in \ZZ$}    \\
%0 & \text{otherwise} 
0 & \text{if  $k_1 \notin \ZZ q_1$ }
\end{array}
\right.
\end{align*}

Suppose $k_1 = tq_1$ for some $t \in \ZZ$. Then $k_j - k_1q_j/q_1 = k_j - tq_j$ is an integer for each $j$. 
Thus the $j$-th integral equals $1$ if $k_j = tq_j$ and equals $0$ otherwise. 
So the product of integrals equals $1$ if $k_j = tq_j$ for all $j \in \{2 \dots n\}$, i.e., if $k = tq$. 
Otherwise, the product of integrals is $0$.  
\end{proof}

%For $z \in \RR^{nd}$, we write $z = (z^1,\ldots,z^n)$, where $z^i \in \RR^d$.  
%Define $\Phi(z) = \prod_{i=1}^{n} \phi(z^i)$. 
%Note that $\Phi$ is a non-negative Schwartz function on $\RR^{nd}$ such that $\widehat{\Phi}(0)=1$. 
%Moreover, 

\subsubsection*{Proof of Lemma \ref{thm:linearform}} 
%\textbf{Proof of Lemma \ref{thm:linearform}.} 
%
%We proceed to the main argument. 
%
Let $q = (q_1,\ldots,q_n) \in \ZZ^n, q \neq 0$. 
%%%Write $\gcd(q) = \gcd\cbr{q_1,\ldots,q_d}$. 
Let $0 < \delta < 1/2$. 
Let $\mu$ be any Borel probability measure on $\RR^{mn}$ with support contained in $[0,1]^{mn}.$ 
For $z \in \RR^{mn}$, we write $z = (z^{(1)},\ldots,z^{(m)})$, where $z^{(i)} \in \RR^n$. 
Note that 
\begin{align}\label{indicator}
\mu(L_{\delta,q, \theta}^m) = \int_{[0,1]^{mn}} \prod_{i=1}^{m} \one_{L_{\delta,q, \theta_i}}(x^{(i)})  d\mu(x) 
\end{align}

Let $\phi$ be a non-negative Schwartz function on $\RR^n$ with $\widehat{\phi}(0)=1$. 
%Later, we will consider more specific choices for $\phi$. 
%Set $\delta_{\ast} = \delta / |q|$. 
For $x \in \RR^n$, define 
%Define 
$
\phi_{\delta_{\ast}}(x) 
= 
\phi(x/\delta_{\ast}).   
$
Note that 
$\widehat{\phi_{\delta_{\ast}}}(\xi) = \delta_{\ast}^n \widehat{\phi}(\delta_{\ast} \xi)$ 
for all $\xi \in \RR^n$. 
Note also that $\phi_{\delta_{\ast}} \ast L_{q, \theta_i}$ is a smooth $\ZZ^n$-periodic function on $\RR^n$. 

So we can achieve our desired bounds by bounding the expression in \eqref{indicator}.
%To bound \eqref{indicator}, 
To do this, we will approximate $\one_{L_{\delta,q, \theta_i}}$ 
by $\phi_{\delta_{\ast}} \ast L_{q,\theta_i}$. 
With this in mind, we first consider the integral in \eqref{indicator} with $\one_{L_{\delta,q, \theta_i}}$ replaced by $\phi_{\delta_{\ast}} \ast L_{q, \theta_i}$ for all $i \in \{1 \dots m\}$.

Let $K > 0$ be arbitrary. Recall that $\delta_{*}=\delta/|q|_2$. 
By Parseval's theorem and 
%\eqref{fourier Lq}, 
Lemma \ref{fourier Lq lemma}, 
we have 
\begin{align}
\label{parseval} 
\int_{[0,1]^{mn}} \prod_{i=1}^{m} \phi_{\delta_{\ast}} \ast L_{q,\theta_i} (x^{(i)}) d\mu(x)
&= 
\sum_{k \in \ZZ^{mn}} \overline{ \widehat{\mu}(k) } \prod_{i=1}^{m}  \widehat{L_{q,\theta_i}}(k^{(i)}) \widehat{\phi_{\delta_{\ast}}}(k^{(i)})) 
\\
\notag
&=
%xplain that power + w
\delta_{\ast}^{mn} |q|_2^{m} \sum_{t \in \ZZ^m} \overline{ \widehat{\mu}( t_1q,\ldots,t_n q ) } \prod_{i=1}^{m} e^{-2 \pi i t_i \theta_i} \prod_{i=1}^{m}\widehat{\phi}(\delta_{\ast} t_i q) 
\\ 
\notag
&= \delta_{\ast}^{m(n-1)} \delta^{m} (1+ S + T), 
\end{align}
where 
\begin{align*}
S = \sum_{t \in \ZZ^m, \, 0 < |t| < K/\delta } \overline{ \widehat{\mu}( t_1q,\ldots,t_n q ) }\prod_{i=1}^{m} e^{-2 \pi i t_i \theta_i} \prod_{i=1}^{m} \widehat{\phi}(\delta_{\ast} t_i q), \\
T = \sum_{t \in \ZZ^m, \, |t| \geq K/\delta } \overline{ \widehat{\mu}( t_1q,\ldots,t_n q ) }\prod_{i=1}^{m} e^{-2 \pi i t_i \theta_i} \prod_{i=1}^{m} \widehat{\phi}(\delta_{\ast} t_i q).   
\end{align*}
%The $t=0$ term equals $1$ because $\widehat{\phi}(0)=1$ and $\mu$ is a probability measure. 

Since $|\widehat{\phi}| \leq \widehat{\phi}(0)=1$, we have 
\begin{align}\label{S bound}
|S| \leq \sum_{t \in \ZZ^m, \, 0 < |t|_{\infty} < K/\delta } |\widehat{\mu}(t_1q,\ldots,t_m q) |. 
\end{align}
Since $|\widehat{\mu}| \leq \widehat{\mu}(0)=1$ 
and since $\widehat{\phi}$ is a Schwartz function, we have, for every $N > 0$, 
\begin{align}\label{T bound}
|T| 
&\ll 
\sum_{t \in \ZZ^m, \, |t|_{\infty} \geq K/\delta } \prod_{i=1}^{m} (1+ \delta_{\ast} |q|_2 |t_i| |)^{-N-m} 
\\
\notag 
&\ll 
\sum_{t \in \ZZ^m, \, |t|_{\infty} \geq K/\delta } (\delta_{\ast} |q|_2 |t|_{\infty} )^{-N-m} 
\ll
( \delta_{\ast} |q|_2 )^{-m} K^{-N}. 
\end{align}
The implied constants here depend only on $\phi$ and $N$. 

%%To derive the desired upper and lower bounds, we will impose different conditions on $\phi$. 

We first prove the upper bound in Theorem \ref{thm:linearform}. 
%We choose $\phi$ as above, but with the additional properties that 
We choose $\phi$ to be a non-negative Schwartz function on $\RR^n$
such that $\widehat{\phi}(0)=1$,
$m(\phi) := \min\cbr{\phi(x) : x \in B_2(0)}$ is positive, 
and $\widehat{\phi}=0$ outside $B_K(0)$. 
For all $x \in L_{\delta,q, \theta_i} = L_{q, \theta_i} + B_{\delta_{\ast}}(0)$, there exists $z \in L_{q, \theta_i}$ with $|x-z| \leq \delta_{\ast}$, 
and hence  
$$
\phi_{\delta_{\ast}}(x-y) \geq m(\phi) \one_{B_{2\delta_{\ast}}(x)}(y) \geq m(\phi) \one_{B_{\delta_{\ast}}(z)}(y)   
$$
for all $y \in \RR^n$. 
By integrating with respect to the measure $L_{q, \theta_i}$ and then applying \eqref{LqAD}, 
%From this and \eqref{LqAD}, 
it follows that 
\begin{align}\label{approx upper}
\phi_{\delta_{\ast}}  \ast L_{q,\theta_i}(x) \geq m(\phi) a \delta_{\ast}^{n-1} \one_{L_{\delta,q, \theta_i}}(x)
\end{align}
for all $x \in \RR^n$. 
Since $m(\phi) > 0$, combining this with \eqref{indicator} and \eqref{parseval} yields 
$$
\mu(L_{\delta,q, \theta}^m) \leq (m(\phi) a)^{-m} \delta^{m}(1+S+T). 
$$
Since $\widehat{\phi}=0$ outside $B_K(0)$, we have $T = 0$.  
Setting $K = 2$ and appealing to \eqref{S bound} completes the proof of the upper bound in Theorem \ref{thm:linearform}. 

To prove the lower bound in Theorem \ref{thm:linearform}, 
we put a different condition on $\phi$.
We choose $\phi$ to be a non-negative Schwartz function on $\RR^n$ 
such that $\widehat{\phi}(0)=1$ and $\phi = 0$ outside $B_1(0)$.  
For each $x$ in 
%$L_{\delta,q, \theta_i} = L_{q, \theta_i} + B_{\delta_{\ast}}(0)$,  
$L_{\delta,q, \theta_i}$, 
there is a $z \in L_{q, \theta_i}$ such that $|x-z| \leq \delta_{\ast}$, 
and hence 
$$
\phi_{\delta_{\ast}}(x-y) 
\leq 
|\phi|
\one_{B_{\delta_{\ast}}(x)}(y) 
\leq 
|\phi| \one_{B_{2\delta_{\ast}}(z)}(y) 
$$
for all $y \in \RR^n$.  
By integrating with respect to the measure $L_{q, \theta_i}$ and applying \eqref{LqAD}, 
%%%it follows that, for all $x$ in 
%%$L_{\delta,q, \theta_i} = L_{q,\theta_i} + B_{\delta_{\ast}}(0)$, we have 
$$
\phi_{\delta_{\ast}} \ast L_{q,\theta_i}(x) \leq |\phi |_{\infty} b (2\delta_{\ast})^{n-1}.  
$$ 
for all $x \in L_{\delta,q, \theta_i}$. 
On the other hand, 
%for all $x$ not in %$L_{\delta,q, \theta_i} = L_{q, \theta_i} + B_{\delta_{\ast}}(0)$, 
if $x$ is not in $L_{\delta,q, \theta_i}$, 
we have 
$\phi_{\delta_{\ast}}(x-y) = 0$ for all $y \in L_{q, \theta_i}$, 
and so $\phi_{\delta_{\ast}} \ast L_{q,\theta_i}(x) = 0$. 
Therefore, for all $x \in \RR^n$,  
\begin{align}\label{phi lower bound 1}
\phi_{\delta_{\ast}} \ast L_{q, \theta_i}(x) \leq |\phi|_{\infty} b (2 \delta_{\ast} )^{n-1} \one_{L_{\delta,q, \theta_i}}(x). 
\end{align}
Combining this with \eqref{indicator} and \eqref{parseval} yields 
$$
\mu(L_{\delta,q, \theta}^m) \geq 2^{-m(n-1)} (|\phi | b)^{-m} \delta^{m} (1+S+T). 
$$
Since $1+S+T \geq 1-|S|-|T|$, 
applying both \eqref{S bound} and \eqref{T bound} the 
lower bound in Theorem \ref{thm:linearform}.

\subsection{Countable Stability Lemma}

%First, we state a useful property of Fourier dimension.

The following is Lemma 2.5 of \cite{hambrook-yu}; its proof can be found there.

\begin{lemma}\label{CS}
Fourier dimension is countably stable on closed sets, i.e., for every countable collection $(A_i)$ of closed sets in $\RR^d$, 
$$
\dim_F\rbr{\bigcup_i A_i} = \sup_i \dim_F A_i. 
$$
More generally, for every countable collection $(A_i)$ of closed sets in $\RR^d$ and every set $E \subset \RR^d$, 
$$
\dim_F\rbr{E \cap \bigcup_i A_i} = \sup_i \dim_F (E \cap A_i). 
$$
\end{lemma}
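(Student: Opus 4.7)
The plan is to prove the general statement (the first equality follows by setting $E = \RR^d$). The inequality
$$\dim_F(E \cap \bigcup_i A_i) \geq \sup_i \dim_F(E \cap A_i)$$
is immediate from the definition: any probability measure $\mu$ with $\supp(\mu) \subseteq E \cap A_i$ also has $\supp(\mu) \subseteq E \cap \bigcup_j A_j$, so every Fourier decay exponent witnessed on the right is witnessed on the left.

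For the harder direction, I would argue by contradiction. Suppose $\beta$ satisfies $\sup_i \dim_F(E \cap A_i) < \beta < \dim_F(E \cap \bigcup_i A_i)$, and let $\mu \in \mathcal{P}(E \cap \bigcup_i A_i)$ satisfy $|\hat\mu(\xi)| \ll (1+|\xi|)^{-\beta/2}$. By countable additivity, $\mu(A_{i_0}) > 0$ for some index $i_0$. Since $A_{i_0}$ is closed, the normalized restriction $\nu := \mu|_{A_{i_0}}/\mu(A_{i_0})$ has $\supp(\nu) \subseteq A_{i_0} \cap \supp(\mu) \subseteq E \cap A_{i_0}$, so $\nu \in \mathcal{P}(E \cap A_{i_0})$. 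It then suffices to show $|\hat\nu(\xi)| \ll (1+|\xi|)^{-\beta/2}$, which contradicts $\beta > \dim_F(E \cap A_{i_0})$ and closes the argument.

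To establish this inherited decay I would exploit the fact that multiplication of a measure by a Schwartz function preserves pointwise Fourier decay: if $\phi$ is Schwartz, then $\widehat{\phi\mu} = \hat\phi \ast \hat\mu$, and the elementary inequality
$$(1+|\xi-\eta|)^{-\beta/2} \leq (1+|\xi|)^{-\beta/2}(1+|\eta|)^{\beta/2}$$
yields $|\widehat{\phi\mu}(\xi)| \ll (1+|\xi|)^{-\beta/2}$ with constant depending on $\phi$. Because $A_{i_0}$ is closed, it admits smooth cutoffs $\phi_n$ equal to $1$ on $A_{i_0}$ and supported in shrinking open neighborhoods of $A_{i_0}$. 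Then $\phi_n \mu \to \mu|_{A_{i_0}}$ in total variation by dominated convergence, and hence $\widehat{\phi_n\mu}(\xi) \to \widehat{\mu|_{A_{i_0}}}(\xi)$ pointwise in $\xi$.

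The main obstacle is that the implied constants coming from the Peetre estimate, of the form $\int |\hat\phi_n(\eta)|(1+|\eta|)^{\beta/2}\,d\eta$, can blow up as the cutoffs $\phi_n$ become sharper. Controlling these constants uniformly in $n$ is the technical heart of the argument and is precisely where closedness of $A_{i_0}$ must be exploited; one would construct the $\phi_n$ carefully (for instance as regularized distance functions with quantitative Schwartz-seminorm bounds) or, alternatively, invoke a compactness/diagonal extraction in the weak-$\ast$ topology on $\mathcal{P}(K)$ for a compact $K \supseteq \supp(\mu)$ to extract a limit measure that retains the exponent $\beta$. This step is what distinguishes Fourier dimension (which is not countably stable in general) from Hausdorff dimension and forces the closedness hypothesis on the $A_i$.
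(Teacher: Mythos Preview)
The paper does not actually prove this lemma; it merely cites \cite{hambrook-yu}. So there is no in-paper proof to compare against, but your proposal has a genuine gap that you yourself flag and do not close.

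The obstacle you identify is not a technicality to be finessed: approximating $\one_{A_{i_0}}$ by smooth cutoffs $\phi_n$ equal to $1$ on $A_{i_0}$ with supports shrinking to $A_{i_0}$ forces the Schwartz seminorms of $\phi_n$ to blow up (the transition from $1$ to $0$ occurs over distance $\to 0$), so the Peetre constant $\int |\widehat{\phi_n}(\eta)|(1+|\eta|)^{\beta/2}\,d\eta$ diverges. Neither ``regularized distance functions'' nor weak-$\ast$ extraction repairs this: the former still have exploding derivatives, and the latter cannot upgrade a sequence of decay bounds with diverging constants into a uniform one. Indeed, restriction of a measure with good Fourier decay to an arbitrary closed set of positive mass need \emph{not} preserve the decay, so the whole restriction-and-approximate strategy is aimed at a false target.

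The standard argument (Ekstr\"om--Persson--Schmeling, which is what \cite{hambrook-yu} ultimately invokes) bypasses restriction entirely via the Baire category theorem. Since $\supp(\mu)$ is closed in $\RR^d$ (hence a complete metric space) and is covered by the relatively closed sets $A_i \cap \supp(\mu)$, Baire yields an index $i_0$ and an open set $U \subseteq \RR^d$ with $\emptyset \neq U \cap \supp(\mu) \subseteq A_{i_0}$. Now take a \emph{single} nonnegative $\phi \in C_c^{\infty}(\RR^d)$ with $\supp(\phi) \subseteq U$ and $\int \phi\,d\mu > 0$ (possible since $U$ meets $\supp(\mu)$). Then $\supp(\phi\mu) \subseteq U \cap \supp(\mu) \subseteq A_{i_0} \cap E$, and your own convolution estimate, applied once with this fixed $\phi$, gives $|\widehat{\phi\mu}(\xi)| \ll (1+|\xi|)^{-\beta/2}$. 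Normalizing produces a measure in $\mathcal{P}(E \cap A_{i_0})$ witnessing $\dim_F(E \cap A_{i_0}) \geq \beta$, the desired contradiction. The closedness of the $A_i$ enters exactly through Baire, not through any cutoff construction.
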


We will use the following corollary. 

\begin{corollary}
\label{periodic lemma}
Let $E \subset \RR^{mn}$. 
If $E = E - k$ for all $k \in \ZZ^{mn}$, then 
$$
\dim_F (E) = \dim_F ( E \cap [0,1]^{mn} ). 
$$
\end{corollary}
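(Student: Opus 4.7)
The plan is to apply Lemma \ref{CS} to the countable cover of $\RR^{mn}$ by translated unit cubes $A_k = [0,1]^{mn} + k$ indexed by $k \in \ZZ^{mn}$. Each $A_k$ is closed and their union is all of $\RR^{mn}$, so the second (more general) form of the lemma gives
$$
\dim_F(E) = \dim_F\rbr{E \cap \bigcup_{k \in \ZZ^{mn}} A_k} = \sup_{k \in \ZZ^{mn}} \dim_F(E \cap A_k).
$$

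Next I would use the $\ZZ^{mn}$-periodicity hypothesis to rewrite each piece. From $E = E - k$ we also get $E = E + k$, so
$$
E \cap ([0,1]^{mn} + k) = (E - k) \cap [0,1]^{mn} + k = (E \cap [0,1]^{mn}) + k.
$$
Fourier dimension is invariant under translation, since if $\mu \in \mathcal{P}(A)$ then its translate $\mu_k$ defined by $\mu_k(B) = \mu(B - k)$ lies in $\mathcal{P}(A+k)$ and satisfies $|\widehat{\mu_k}(\xi)| = |\widehat{\mu}(\xi)|$. Therefore $\dim_F(E \cap A_k) = \dim_F(E \cap [0,1]^{mn})$ for every $k$.

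Combining the two displayed equations yields
$$
\dim_F(E) = \sup_{k \in \ZZ^{mn}} \dim_F(E \cap [0,1]^{mn}) = \dim_F(E \cap [0,1]^{mn}),
$$
which is the desired conclusion. There is no real obstacle here: the only subtlety is that $E$ itself need not be closed, which is exactly why we invoke the general version of Lemma \ref{CS} (where $E$ is an arbitrary set and only the sets $A_k$ in the cover are required to be closed) rather than the basic countable-stability statement for closed sets.
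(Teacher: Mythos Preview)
Your proof is correct and follows essentially the same approach as the paper: apply the general form of Lemma~\ref{CS} to the closed cover $A_k = [0,1]^{mn} + k$ (the paper uses $[0,1]^{mn} - k$, which is the same cover), use periodicity to identify $E \cap A_k$ as a translate of $E \cap [0,1]^{mn}$, and invoke translation invariance of Fourier dimension. Your write-up is in fact slightly more explicit than the paper's, since you spell out why translation preserves Fourier dimension.
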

\begin{proof}
For all $k \in \ZZ^{mn}$, let $A_k = [0,1]^{mn} - k$. Note $E \cap A_k = E \cap ([0,1]-k) = (E-k) \cap ([0,1]-k) = (E \cap [0,1]) - k$. Therefore $\dim_F (E \cap A_k) = \dim_F ( E \cap [0,1]^{mn})$ for all $k$.
\begin{comment}
    Let $\mu$ be a probability measure on $[0,1]^{mn}$. Define $mu_k$ to be the probability measure on $[0,1]^{mn} - k$ such that $\mu(S) = \mu_k(S - k)$. Then 
    $$|\hat{\mu_k}(\xi)| = |\int e^{-2 pi i (x-k) \cdot \xi} d\mu(x)| =|e^{-2 pi i (-k) \cdot \xi}\int e^{-2 pi i (x) \cdot \xi} d\mu(x)| = |\hat{\mu}(\xi)|$$
\end{comment}
\end{proof}

\subsection{Borel-Cantelli Argument}

To prove $\dim_F E(m,n,Q,\Psi,\theta) \leq 2s(Q,\Psi)$, we need one final lemma. 

\begin{lemma}[Borel-Cantelli Lemma - Convergence Case]
Suppose $(\Omega,\mathcal{E},P)$ is a probability space and ($E_i$) is a sequence of sets in $\mathcal{E}$ (i.e., a sequence of events). If $\sum_{i=1}^{\infty} P(E_i) < \infty$, then 
$$
P(\cbr{\omega \in \Omega : \omega \in E_i \text{ for infinitely many } i}) = 0.
$$
\end{lemma}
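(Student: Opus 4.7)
The plan is to prove this classical fact by rewriting the event ``$\omega \in E_i$ for infinitely many $i$'' as a $\limsup$ of the $E_i$, and then combining continuity of measure from above with countable subadditivity. First I would observe that
$$
\cbr{\omega \in \Omega : \omega \in E_i \text{ for infinitely many } i} = \bigcap_{N=1}^{\infty} \bigcup_{i=N}^{\infty} E_i =: \limsup_{i} E_i,
$$
since $\omega$ lies in infinitely many $E_i$ precisely when, for every $N$, there exists some $i \geq N$ with $\omega \in E_i$.

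Next, setting $F_N = \bigcup_{i=N}^{\infty} E_i$, the sequence $(F_N)$ is decreasing and $F_1$ has finite measure (bounded by $1$), so continuity of $P$ from above gives
$$
P\rbr{\limsup_{i} E_i} = \lim_{N \to \infty} P(F_N).
$$
Then countable subadditivity yields $P(F_N) \leq \sum_{i=N}^{\infty} P(E_i)$ for each $N$.

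Finally, I would invoke the hypothesis $\sum_{i=1}^{\infty} P(E_i) < \infty$, which forces the tail sums $\sum_{i=N}^{\infty} P(E_i)$ to tend to $0$ as $N \to \infty$. Combining the previous displays gives
$$
0 \leq P\rbr{\limsup_{i} E_i} \leq \lim_{N \to \infty} \sum_{i=N}^{\infty} P(E_i) = 0,
$$
completing the proof. There is no real obstacle here; the only point requiring any care is the justification of continuity from above, which is standard and uses only $\sigma$-additivity together with the finiteness of $P(F_1) \leq 1$.
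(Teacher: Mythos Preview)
Your proof is correct and is exactly the standard argument for the convergence half of the Borel--Cantelli lemma. The paper itself does not prove this lemma at all; it simply states it as a well-known result and then applies it, so there is nothing to compare your approach against.
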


%Now we are ready to prove $\dim_F E(m,n,Q,\Psi,\theta) \leq 2s(Q,\Psi)$. 

Corollary \ref{periodic lemma} means we only need to show 
$\dim_F(E(m,n,Q,\Psi,\theta)) \cap [0,1]^{mn}) \leq 2s(Q,\Psi)$. 
Seeking a contradiction, suppose the inequality is false. 
So there exists a $\mu \in P(E(n,d,Q,\Psi)) \cap [0,1]^{mn}$ and $s > s(Q,\Psi)$ such that 
$|\widehat \mu(\xi)| \ll |\xi|^{-s}$. 
Lemma \ref{thm:linearform} tells us, for $0 < \delta < 1/2$,
$$\mu(L_{\delta, q,\theta}) \ll \delta^m \rbr{1 + O\rbr{ \sum_{t\in \ZZ^m, 0 < |t| \leq 2/\delta}|\widehat \mu(tq^T)| }}.$$
Because of the Fourier decay assumption, the sum over $t$ is
$$\ll \sum_{t\in \ZZ^m, 0 < |t| \leq 2/\delta} |tq^T|^{-s} \leq |q|^{-s} \sum_{t\in \ZZ^m, 0 < |t| \leq 2/\delta} |t|^{-s} \ll |q|^{-s}\delta^{s-m}.$$
Therefore $\mu(L_{\delta, q,\theta}) \ll \delta^m(1 + |q|^{-s}\delta^{s-m})$. 
Setting $\delta = \Psi(q)$, we get:
$$
\sum_{q \in Q} \mu(L_{\delta, q,\theta}) 
\ll 
\sum_{q \in Q} \Psi(q)^{m} + \sum_{q \in Q} \Psi(q)^{m}|q|^{-s}\Psi(q)^{(s-m)} 
= 
\sum_{q \in Q} \Psi(q)^{m} + \sum_{q \in Q} \Psi(q)^{s}|q|^{-s} < \infty.
$$
So, because of the convergence case of the Borel-Cantelli lemma and because 
$$
E(m,n,Q,\Psi,\theta) \subseteq \{x \in \mathbb{R}^{mn} : x \in L_{\delta,q,\theta}^m \text{ for infinitely many } q \in \mathbb{Z}^n\}, 
$$
we have $\mu(E(m,n,Q,\Psi, \theta) \cap [0,1]^{mn})=0$. 
This contradicts that the support of $\mu$ is contained in $E(m,n,Q,\Psi,\theta) \cap [0,1]^{mn}$.

\section{Proof of Proposition \ref{main-lower-bound}}

\subsection{Preliminaries}\label{sec-FM-prelim}
%Now we begin the proof of the lower bound 
%$\dim_F E(n,d,Q,\Psi) \geq 2s(Q,\Psi)$ in 
%Theorem \ref{main-thm-2}. 
%\textbf{The proof is spread across sections \ref{}, \ref{}, \ref{}.} Here we do some set-up. 

%%%Let $\mathcal{M}$ = $\{2^k : k \in  \NN\}$
Recall $\Psi \leq 1/2$, $\Psi_*(q)$ = $\Psi(q) |q|^{-1}$, and 
$
s(Q,\Psi) = \inf\{ s \geq 0 : \sum_{q \in Q}  \Psi_{*}^s(q)  <  \infty \}. 
$ 
It suffices to show that $\dim_F E(m,n,Q,\Psi, \theta) \geq 2s$ for all $s < s(Q,\Psi)$. 
To do this, we fix $s < s(Q,\Psi)$ arbitrarily and construct a probability measure 
$\mu$ such that $\supp(\mu) \subseteq E(m,n,Q,\Psi,\theta)$ and 
$|\widehat{\mu}(\xi)| = o(|\xi|^{-s})$ as $\xi \to \infty$. 
%\begin{align}\label{mu-decay}
%%%|\widehat{\mu}(\xi)| \ll |\xi|^{-s} \quad \forall \xi \in \RR^d, \xi \neq 0.
%|\widehat{\mu}(\xi)| = o(|\xi|^{-s}) \quad \text{ as } \xi \to \infty. 
%\end{align}

For each $M > 0$, define 
$$
Q(M) =  \left\{q \in Q : M/2 < \Psi^{-s}_*(q) \leq M \right\}.  
$$
%$$
%h(M) = \rbr{\dfrac{M}{2 \log_2^{n+1}(M)}}^{1/2mn}.
%$$

%Since $s < s(Q,\Psi)$, $\sum_{q\in Q} \Psi^s_*(q) = \infty$. 
%Splitting the sum dyadically gives  $\sum_{k=0}^{\infty}  \sum_{q \in Q(2^k)} \Psi_{*}(q)^s = \infty$. 
%Then $\sum_{q \in Q(2^k)} \Psi_{*}(q)^s \geq k^{-(n+1)}$ for infinitely many integers $k \geq 0$.  
Define 
$$
\mathcal{M} = \cbr{  2^k :  k \in \NN, \sum_{q \in Q(2^k)} \Psi_{*}(q)^s \geq k^{-(n+1)}  }. 
$$
%Note $\mathcal{M}$ is unbounded. 

\begin{lemma}\label{Q-M-lemma} 
$\mathcal{M}$ is unbounded and 
for every $M \in \mathcal{M}$  
%If $M \in \mathcal{M}$, then 
$$
|Q(M)| \geq \dfrac{M}{2 \log_2^{n+1}(M)}.  
$$
\end{lemma}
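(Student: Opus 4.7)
The plan is to establish the two conclusions separately: unboundedness of $\mathcal{M}$ by a contradiction argument exploiting the divergence $\sum_{q \in Q}\Psi_*(q)^s = \infty$ (which holds since $s < s(Q,\Psi)$), and the cardinality lower bound by directly comparing the defining inequality of $\mathcal{M}$ with the upper bound on $\Psi_*(q)^s$ forced by $q \in Q(M)$.

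For unboundedness, I would first observe that since $\Psi \leq 1/2$ and $|q| \geq 1$ for every nonzero $q \in \ZZ^n$, every nonzero $q \in Q$ satisfies $\Psi_*(q)^{-s} \geq 2^s$. The dyadic sets $\{Q(2^k)\}_{k \in \ZZ,\, k \geq \lceil s \rceil}$ therefore partition $\{q \in Q : \Psi_*(q) > 0\}$, while the remaining $q$'s contribute zero to $\sum_{q \in Q}\Psi_*(q)^s$. Hence
$$
\sum_{k \geq \lceil s \rceil} \sum_{q \in Q(2^k)} \Psi_*(q)^s = \sum_{q \in Q} \Psi_*(q)^s = \infty.
$$
I would also note that each $Q(M)$ is finite: the defining condition $\Psi_*^{-s}(q) \leq M$ gives $\Psi(q) = |q|\Psi_*(q) \geq |q| M^{-1/s}$, which combined with $\Psi(q) \leq 1/2$ forces $|q| \leq M^{1/s}/2$. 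Now suppose, for contradiction, that $\mathcal{M}$ is bounded; then there exists $K$ such that $\sum_{q \in Q(2^k)} \Psi_*(q)^s < k^{-(n+1)}$ for every integer $k > K$. Since $n \geq 1$, $\sum_{k > K} k^{-(n+1)}$ converges, and adding the finite head $\sum_{\lceil s \rceil \leq k \leq K}$ (finite because each $Q(2^k)$ is finite) would yield $\sum_{q \in Q}\Psi_*(q)^s < \infty$, contradicting the displayed divergence.

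For the size bound with $M = 2^k \in \mathcal{M}$, the definition of $\mathcal{M}$ gives
$$\sum_{q \in Q(M)} \Psi_*(q)^s \geq k^{-(n+1)} = \log_2^{n+1}(M)^{-1},$$
while the defining condition $\Psi_*^{-s}(q) > M/2$ for $q \in Q(M)$ gives $\Psi_*(q)^s < 2/M$. Combining these yields $|Q(M)| \cdot 2/M > \log_2^{n+1}(M)^{-1}$, which rearranges to $|Q(M)| > M/(2\log_2^{n+1}(M))$, as desired.

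I do not anticipate a serious obstacle. The only subtle point is ensuring that the head of the dyadic decomposition consists of finitely many finite summands, so that the contradiction is genuine; this is handled by the finiteness of each $Q(M)$, which in turn relies on the standing assumption $\Psi \leq 1/2$ (justified without loss of generality at the start of the paper) together with $|q| \geq 1$ for nonzero $q \in \ZZ^n$.
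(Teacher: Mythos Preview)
Your proposal is correct and follows essentially the same approach as the paper: both use the divergence of $\sum_{q \in Q}\Psi_*(q)^s$ together with the dyadic decomposition and the convergence of $\sum_k k^{-(n+1)}$ to get unboundedness of $\mathcal{M}$, and both combine the bound $\Psi_*(q)^s < 2/M$ for $q \in Q(M)$ with the defining inequality of $\mathcal{M}$ to obtain the cardinality estimate. Your extra verification that each $Q(M)$ is finite is a nice touch that the paper leaves implicit.
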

\begin{proof}
Since $s < s(Q,\Psi)$, we have $\sum_{q\in Q} \Psi^s_*(q) = \infty$. 
Splitting the sum dyadically gives  $\sum_{k=0}^{\infty}  \sum_{q \in Q(2^k)} \Psi_{*}(q)^s = \infty$. 
Then $\sum_{q \in Q(2^k)} \Psi_{*}(q)^s \geq k^{-(n+1)}$ for infinitely many integers $k \geq 0$.  
Now assume $M \in \mathcal{M}$. Then $M = 2^{k}$ for some $k \in \NN$, $k^{n+1} = \log_2^{n+1}(M)$, and 
$$
|Q(M)| \max_{q \in Q(M)} \Psi_{*}(q)^s \geq \sum_{q \in Q(M)} \Psi_{*}(q)^s \geq \dfrac{1}{\log_2^{n+1}(M)}. 
$$
By definition, $2/M > \Psi_{*}(q)^s$ for each $q \in Q(M)$. 
%For each $q \in Q(M)$, $2/M > \Psi_{*}(q)^s$. 
\end{proof}

For each $\ell = (\ell_{11},\ldots,\ell_{1n},\ldots,\ell_{m1},\ldots,\ell_{mn}) \in \ZZ^{mn}$, 
define $\ell_j = (\ell_{1j},\ldots,\ell_{mj})$ for all $1 \leq j \leq n$,
and define 
\begin{align*}
D(\ell)  
%&
&= \cbr{q \in \ZZ^n : \text{There exists $k \in \ZZ^m$ such that } \ell_{ij} = q_j k_i \text{ for every  $i$,$j$} } 
%\\ 
%&= \cbr{q \in \ZZ^n : \exists k \in \ZZ^m, \ell_j = q_j k \text{ for every } j}  
\\ 
&= \cbr{q \in \ZZ^n : \exists k \in \ZZ^m, \ell = k q^T}.  
\end{align*}

If $mn=1$, $D(\ell)$ is the set of divisors 
%%in $Q$ D_Q
of the integer $\ell$. 
Thus, for general $m,n$, we view $D(\ell)$ as the set of divisors of the element $\ell \in \ZZ^{mn}$. 
We will establish a divisor bound for $D(\ell)$.

For $s,t > 0$, define 
\begin{align*}
w_s(t) = \exp\rbr{  \frac{s \log t}{\log \log t} }. 
\end{align*}
For every positive integer $\ell$, define $\tau(\ell)$ to be the number of positive integers that divide $\ell$. 
The function $\tau$ obeys the following classic divisor bound, 
which is due to Wigert. 
For the proof, we refer the reader to 
Wigert \cite{Wigert} or Hardy and Wright \cite[Theorem 317]{HW}. 
%For a proof and historical discussion, 
%we refer the reader to  Hardy and Wright \cite[Theorem 317]{HW}. 

\begin{lemma}\label{integer divisor bound}
%For every $\zeta > \log 2$, 
%there exists $L_{\zeta} \in \NN$ 
%such that, for every $\ell \in \NN$,   
For every integer $\ell \geq 3$ and every $\zeta > \log 2$, there exists $C_\zeta \in \RR$ such that 
$$
\tau(\ell) 
%\leq \exp\rbr{ \frac{\zeta \log \ell}{ \log \log \ell }}. 
\leq C_\zeta w_{\zeta}(\ell). 
$$
\end{lemma}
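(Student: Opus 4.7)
The plan is to invoke the classical proof strategy of Wigert: write $\ell$ as a product of prime powers, split the primes by a threshold $y$, bound each resulting product separately, and then optimize $y$ as a function of $\ell$.

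First, write $\ell = p_1^{a_1} \cdots p_k^{a_k}$ with the $p_i$ distinct primes and each $a_i \geq 1$, so that $\tau(\ell) = \prod_{i=1}^k (a_i + 1)$. Fix a parameter $y \geq 2$ to be chosen later, and partition the indices into $S = \{i : p_i \leq y\}$ and $L = \{i : p_i > y\}$. The two sub-products are handled by very different inequalities.

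For the large primes, use $a + 1 \leq 2^a$ valid for $a \geq 1$ to bound $\prod_{i \in L}(a_i + 1) \leq 2^{\sum_{i \in L} a_i}$. Since $\sum_{i \in L} a_i \log p_i \leq \log \ell$ and $\log p_i > \log y$ for $i \in L$, we get $\sum_{i \in L} a_i \leq \log \ell/\log y$, hence $\prod_{i \in L}(a_i + 1) \leq \exp((\log 2)\log \ell/\log y)$. For the small primes, use the crude bounds $|S| \leq \pi(y) \leq y$ and $a_i + 1 \leq \log \ell/\log 2 + 1$ (from $p_i^{a_i} \leq \ell$), giving $\prod_{i \in S}(a_i + 1) \leq (C \log \ell)^y$ for some absolute constant $C$. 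Multiplying the two estimates and taking logarithms yields $\log \tau(\ell) \leq y \log(C \log \ell) + (\log 2)\log \ell/\log y$.

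Now I would choose $y = (\log \ell)^{1-\eta}$ for a small $\eta > 0$ depending on $\zeta$. Then $\log y = (1-\eta)\log\log \ell$, so the second term becomes $(\log 2/(1-\eta))\, \log \ell/\log\log \ell$, while the first is $O((\log \ell)^{1-\eta} \log\log \ell) = o(\log \ell/\log\log \ell)$ as $\ell \to \infty$. Given any $\zeta > \log 2$, choose $\eta$ small enough that $\log 2/(1-\eta) < \zeta$; then $\log \tau(\ell) \leq \zeta \log \ell/\log\log \ell$ for all sufficiently large $\ell$, and both the $o$-term and the finitely many small values $\ell \geq 3$ can be absorbed into a single constant $C_\zeta$. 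I expect no genuine obstacle: the argument is essentially bookkeeping, and the strict inequality $\zeta > \log 2$ is precisely what provides the slack needed to dominate the $o(\log \ell/\log\log \ell)$ error and absorb the small-prime contribution into the leading exponential.
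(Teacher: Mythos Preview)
Your argument is correct and is essentially the classical Wigert--Hardy--Wright proof. The paper does not supply its own proof of this lemma at all; it simply cites Wigert \cite{Wigert} and Hardy--Wright \cite[Theorem 317]{HW}, and your sketch is exactly the argument found there (split primes at a threshold $y$, bound the large-prime contribution via $a+1\le 2^a$ and the small-prime contribution crudely, then optimize $y$).
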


\begin{lemma}\label{divisor bound}
For every $\ell \in \ZZ^{mn}$, $|\ell| \geq 3$ and every $\zeta > \log 2$, there exists $C_\zeta \in \RR$ such that
$$
|D(\ell)| \leq C_\zeta w_{\zeta}(|\ell|)
$$
\end{lemma}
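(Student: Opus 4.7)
The plan is to reduce this to the integer divisor bound (Lemma \ref{integer divisor bound}) by observing that a single nonzero coordinate of $\ell$ essentially pins down the element $q \in D(\ell)$ up to the choice of one integer divisor.

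Since $|\ell| \geq 3 > 0$, we may fix indices $(i_0, j_0)$ with $|\ell_{i_0 j_0}| = |\ell|$; in particular $\ell_{i_0 j_0} \neq 0$. Suppose $q \in D(\ell)$, so there exists $k \in \ZZ^m$ with $\ell = k q^T$, i.e., $\ell_{ij} = q_j k_i$ for all $i,j$. The single equation $\ell_{i_0 j_0} = q_{j_0} k_{i_0}$ forces $q_{j_0}$ to be a nonzero integer divisor of $\ell_{i_0 j_0}$ (and $k_{i_0} \neq 0$).

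The main point is that the map $q \mapsto q_{j_0}$ is injective on $D(\ell)$: once $q_{j_0}$ is chosen, $k_{i_0} = \ell_{i_0 j_0}/q_{j_0}$ is uniquely determined, and then for every $j$ we must have $q_j = \ell_{i_0 j}/k_{i_0}$, so the entire vector $q$ is determined. Therefore
\begin{equation*}
|D(\ell)| \leq \#\{d \in \ZZ \setminus \{0\} : d \mid \ell_{i_0 j_0}\} = 2\tau(|\ell_{i_0 j_0}|) = 2\tau(|\ell|).
\end{equation*}
Applying Lemma \ref{integer divisor bound} to the positive integer $|\ell| \geq 3$ gives $\tau(|\ell|) \leq C_\zeta' w_\zeta(|\ell|)$ for any $\zeta > \log 2$, and absorbing the factor of $2$ into the constant yields the claimed bound.

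There is no real obstacle here: the one point to be careful about is the degenerate case where $q$ could have $q_{j_0} = 0$, but this is immediately ruled out by $\ell_{i_0 j_0} \neq 0$. The argument makes no use of $Q$ or $\Psi$; it is a purely combinatorial divisor count for elements of $\ZZ^{mn}$.
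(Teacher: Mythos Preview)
Your proof is correct and follows essentially the same approach as the paper: pick $(i_0,j_0)$ with $|\ell_{i_0 j_0}| = |\ell|$, show that $q \mapsto q_{j_0}$ is injective on $D(\ell)$ by recovering $k_{i_0}$ and then all $q_j$ from $q_{j_0}$, and apply the integer divisor bound. The only cosmetic difference is that you make the factor of $2$ from negative divisors explicit before absorbing it into $C_\zeta$.
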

\begin{proof}
Fix $\ell \in \ZZ^{mn}$. Choose $i_0 \in \cbr{1 \dots m}$ and 
$j_0 \in \cbr{1 \dots n}$ such that 
$|\ell| = |\ell_{j_0}| = |\ell_{i_0, j_0}|$.
We claim that the map $(q_1,\ldots,q_n) \mapsto q_{j_0}$ is an injection  
from $D(\ell)$ 
to the set of integers that divide $|\ell| = |\ell_{i_0 j_0}|$. 
To see this, note that if $q \in D(\ell)$, then there is $k_{i_0} \in \ZZ$ such that 
$\ell_{i_0,j} = q_j k_{i_0}$ for all $j$. In particular, 
$\ell_{i_0,j_0} = q_{j_0} k_{i_0} \neq 0$. 
So we can solve the first equation for $k_{i_0}$ and then solve the second equation for $q_j$ to get $q_j = q_{j_0} \ell_{i_0,j} / \ell_{i_0,j_0}$. 
The desired result now follows from Lemma \ref{integer divisor bound}. 
\end{proof}

We define a new set $Q'(M)$ by removing from $Q(M)$ those $q$ which divide small non-zero 
elements $\ell \in \ZZ^{mn}$.  
This is needed for Lemma \ref{FM-Fourier-Bounds}(c) below. 
We also show that this requires removing only a small number of elements, 
which is important for Lemma \ref{FM-Fourier-Bounds}(d) below. 

For each $M \geq 2$, define 
$$
L(M) = \cbr{\ell \in \ZZ^{mn} : 0 < |\ell| \leq \rbr{\dfrac{M}{2 \log_2^{n+1}(M)}}^{1/2mn} },  
$$
$$
Q'(M) = Q(M) \setminus \bigcup_{\ell \in L(M)} D(\ell). 
$$

\begin{lemma}\label{Q-M-prime-lemma}
There is a number $M_0$ such that for all $M \in \mathcal{M}$ with $M \geq M_0$, 
$$|Q'(M)| \geq \dfrac{M}{4 \log_2^{n+1}(M)}.$$ 
\end{lemma}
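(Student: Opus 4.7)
The plan is to estimate $|Q'(M)|$ from below via $|Q'(M)| \geq |Q(M)| - \left|\bigcup_{\ell \in L(M)} D(\ell)\right|$ and show that the subtracted term is at most $M/(4\log_2^{n+1}(M))$ for $M \in \mathcal{M}$ sufficiently large. Lemma \ref{Q-M-lemma} already gives $|Q(M)| \geq M/(2\log_2^{n+1}(M))$, so splitting this lower bound as $M/(2\log_2^{n+1}(M)) = M/(4\log_2^{n+1}(M)) + M/(4\log_2^{n+1}(M))$ would yield the desired inequality.

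To bound the union, I would first apply the trivial bound $\left|\bigcup_{\ell \in L(M)} D(\ell)\right| \leq |L(M)| \cdot \max_{\ell \in L(M)} |D(\ell)|$. Setting $A = M/(2\log_2^{n+1}(M))$, the definition of $L(M)$ gives $|L(M)| \leq (2A^{1/(2mn)} + 1)^{mn} \leq C_1 A^{1/2}$ for some absolute constant $C_1$ once $M$ is large enough that $A^{1/(2mn)} \geq 1$. For the divisor bound, fix any $\zeta > \log 2$; Lemma \ref{divisor bound} gives $|D(\ell)| \leq C_\zeta w_\zeta(|\ell|)$ whenever $|\ell| \geq 3$, and since $w_\zeta$ is increasing, $\max_{\ell \in L(M)} |D(\ell)| \leq C_\zeta w_\zeta(A^{1/(2mn)})$ (with the finitely many $\ell$ satisfying $|\ell| < 3$ absorbed into the constant). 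Combining,
$$\left|\bigcup_{\ell \in L(M)} D(\ell)\right| \leq C_1 C_\zeta \, A^{1/2} \, w_\zeta(A^{1/(2mn)}).$$

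The crux is to show this quantity is at most $A/2 = M/(4\log_2^{n+1}(M))$ for all sufficiently large $M$. Rearranging, it suffices to show $w_\zeta(A^{1/(2mn)}) \leq A^{1/2}/(2 C_1 C_\zeta)$. Since $w_\zeta(t) = \exp(\zeta \log t / \log\log t)$ grows sub-polynomially, for any fixed $\varepsilon > 0$ we have $w_\zeta(t) = o(t^\varepsilon)$ as $t \to \infty$; in particular, taking $\varepsilon = mn$ yields $w_\zeta(A^{1/(2mn)}) = o(A^{1/2})$. Consequently, there exists $M_0$ such that for all $M \geq M_0$ the desired inequality holds, and the restriction $M \in \mathcal{M}$ ensures $A$ is large whenever $M$ is.

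The main technical point, and the only real obstacle, is verifying that the sub-polynomial growth of $w_\zeta$ dominates cleanly; everything else is bookkeeping. The choice to raise $|\ell|$ to the power $1/(2mn)$ in the definition of $L(M)$ is precisely calibrated so that $|L(M)|$ contributes $A^{1/2}$, leaving an extra $A^{1/2}$ slack to absorb the divisor factor $w_\zeta$. No choice of $\zeta$ beyond $\zeta > \log 2$ is needed, since any such $\zeta$ is swallowed by the $\log\log$ in the denominator of the exponent of $w_\zeta$.
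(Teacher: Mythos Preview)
Your proposal is correct and follows essentially the same approach as the paper: bound $|Q'(M)|$ below by $|Q(M)| - |L(M)|\cdot\max_{\ell\in L(M)}|D(\ell)|$, invoke Lemma~\ref{Q-M-lemma} for $|Q(M)|$, and use Lemma~\ref{divisor bound} together with the sub-polynomial growth of $w_\zeta$ to show the subtracted term is $o(A)$. Your write-up is in fact a bit more careful than the paper's (you handle the $|\ell|<3$ cases and spell out the sub-polynomial growth explicitly); one small quibble is that the final remark about $M\in\mathcal{M}$ ensuring $A$ is large is unnecessary, since $A\to\infty$ as $M\to\infty$ regardless, and the role of $M\in\mathcal{M}$ is solely to invoke Lemma~\ref{Q-M-lemma}.
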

\begin{proof}
Let $M \geq 2$. 
Note $|L(M)| \leq 2^{mn} \rbr{M/(2\log_2^{n+1}(M))}^{1/2}$. 
By Lemma \ref{divisor bound}, 
for every $\ell \in L(M)$ and every $\zeta > \log 2$ we have 
$$
|D(\ell)| \leq C_{\zeta} w_{\zeta}(|\ell|) \leq C_{\zeta} w_{\zeta}\rbr{ \rbr{\dfrac{M}{2 \log_2^{n+1}(M)}}^{1/2mn} }. 
$$

Thus, for every $\ell \in L(M)$, we have $|D(\ell)| = \rbr{\dfrac{M}{2 \log_2^{n+1}(M)}}^{o(1)}$. 
%From before, we have $D(\ell) \leq 2\tau(|\ell|). \tau(|\ell|) \leq C|\ell|^1/4$, so for every $\ell \in L(M)$, we have
%$$|D(\ell)| \leq  C \rbr{\dfrac{M}{2 \log_2^{n+1}(M)}}^{1/4}$$
Then 
$$
\abs{ \bigcup_{\ell \in L(M)} D(\ell) } = 2^{mn} \rbr{\dfrac{M}{2 \log_2^{n+1}(M)}}^{1/2 + o(1)}
$$
Therefore, by Lemma \ref{Q-M-lemma}, if $M \in \mathcal{M}$, then 
$$
|Q'(M)| \geq \dfrac{M}{2 \log_2^{n+1}(M)} -  2^{mn} \rbr{\dfrac{M}{2 \log_2^{n+1}(M)}}^{1/2 + o(1)}
$$
\end{proof}

\subsection{Periodic Bump Function}\label{sec-bump}

%Now we define the function $F_M$ along with some other functions that will be useful.

% We also establish a lemma concerning $\supp(F_M)$.  

%In this section, we define the functions $\phi$ and $F_M$, and we determine a set containing $\supp(F_M)$. Along the way, we define some auxillary functions that will be useful later. 

Fix a positive integer $K$ such that $K > mn + s$. 
Fix an arbitrary function $\phi: \RR^m \rightarrow \RR$ such that 
$\phi \geq 0$, 
$\text{supp}(\phi) \subseteq (-1,1)^m$, 
$\int_{\RR^m} \phi(x) dx = 1$, and 
\begin{align}\label{phi-decay}
|\widehat{\phi}(\xi)| \ll (1+|\xi|)^{-K} \quad \text{ for all $\xi \in \RR^m$.} 
\end{align}

For every $\epsilon > 0$ and $x \in \RR^m$, define 
\begin{align*}
\phi^{\epsilon}(x) &= \epsilon^{-m}\phi(\epsilon^{-1}x), \\
\Phi^{\epsilon}(x) &= \sum_{r \in \ZZ^m} \phi^{\epsilon}(x-r). 
\end{align*}
Note that $\Phi^{\epsilon}$ 
%is $C^{K}(\RR^m)$ and 
is periodic for the lattice $\ZZ^m$. 
Since $\text{supp}(\phi) \subseteq [-1,1]^m$, for each fixed $x \in \RR^m$, 
the sum defining $\Phi^{\epsilon}(x)$ has at most finitely many non-zero terms.  
%(it contains only those terms for which $|r|_{\infty} \leq |x|_{\infty} + \epsilon$).  

Recall $\theta \in \RR^m$. 
For $\epsilon > 0$, $q \in \ZZ^n$, and $x=(x_{11},\ldots,x_{1n},\ldots,x_{m1},\ldots,x_{mn}) \in \RR^{mn}$, define $xq$ by identifying $x$ with the $m \times n$ matrix whose $ij$-entry is $x_{ij}$, and define 
\begin{align*}
\Phi^{\epsilon}_{q,\theta}(x) = \Phi^{\epsilon}(xq - \theta)
%= \sum_{k \in \ZZ^m} \widehat{\phi}(\epsilon k) e^{ 2 \pi i k \cdot (xq - \theta) }.
\end{align*}
Note that $\Phi^{\epsilon}_{q,\theta}$ 
%is $C^{K}(\RR^{mn})$ and 
is periodic for the lattice $\ZZ^{mn}$.

\begin{comment}
For each $\ell = (\ell_{11},\ldots,\ell_{1n},\ldots,\ell_{m1},\ldots,\ell_{mn}) \in \ZZ^{mn}$, define $\ell_j = (\ell_{1j},\ldots,\ell_{mj})$ for all $1 \leq j \leq n$. 
Recall
\begin{align*}
D(\ell)  
&= \cbr{q \in \ZZ^n : \text{There exists $k \in \ZZ^m$ such that } \ell_{ij} = q_j k_i \text{ for every  $i$,$j$} }. 
%\\ 
%&= \cbr{q \in \ZZ^n : \exists k \in \ZZ^m, \ell_j = q_j k \text{ for every } j}  
%\\ 
%&= \cbr{q \in \ZZ^n : \exists k \in \ZZ^m, \ell = k q^T}.  
\end{align*}
If $mn=1$, $D(\ell)$ is the set of divisors of the integer $\ell$. 
\end{comment}

Recall the definition of $D(\ell)$ from Section \ref{sec-FM-prelim}. 

\begin{lemma}\label{Phi-fourier}%[Lemma 7.1 of Transactions]
Let $\epsilon > 0$. 
Let $\ell = (\ell_{11},\ldots,\ell_{1n},\ldots,\ell_{m1},\ldots,\ell_{mn}) \in \ZZ^{mn}$. 
%%Let $q \in \ZZ^n$, $q \neq 0$. 
Let $q \in \ZZ^n$. 
For every $j \in \cbr{1,\ldots,n}$ for which $q_{j} \neq 0$, 
\begin{align*}%%%\label{111}
\widehat{\Phi^{\epsilon}_{q,\theta}}(\ell) 
=
\left\{
\begin{array}{ll}
e^{-2 \pi i q_{j}^{-1} \ell_{j} \cdot \theta} \widehat{\phi}( \epsilon q_{j}^{-1} \ell_{j} ) & \text{if } q \in D(\ell),\\
0 & \text{otherwise, }
\end{array} \right.
\end{align*}
where $\ell_j = (\ell_{1j},\ldots,\ell_{mj})$. 
\end{lemma}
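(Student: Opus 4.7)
The plan is to derive the full Fourier series expansion of $\Phi^{\epsilon}_{q,\theta}$ on the torus $\TT^{mn} = \RR^{mn}/\ZZ^{mn}$, and then simply read off the Fourier coefficients. First I would treat $\Phi^{\epsilon}$ as the $\ZZ^m$-periodization of $\phi^{\epsilon}$ and use the fact that $\widehat{\phi^{\epsilon}}(\xi) = \widehat{\phi}(\epsilon \xi)$ on $\RR^m$, together with the standard computation of Fourier series of a periodization (equivalently, Poisson summation), to obtain
$$
\Phi^{\epsilon}(y) \;=\; \sum_{t \in \ZZ^m} \widehat{\phi}(\epsilon t)\, e^{2\pi i t \cdot y},
$$
with absolute and uniform convergence guaranteed by the decay bound \eqref{phi-decay}, since $K > mn + s > m$.

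Next I would substitute $y = xq - \theta$ to express
$$
\Phi^{\epsilon}_{q,\theta}(x) \;=\; \sum_{t \in \ZZ^m} \widehat{\phi}(\epsilon t)\, e^{-2\pi i t \cdot \theta}\, e^{2\pi i (tq^T) \cdot x},
$$
where I am using the identification $t \cdot (xq) = (tq^T) \cdot x$ that follows directly from the note printed immediately after Lemma \ref{thm:linearform} describing how to regard $tq^T$ as a vector in $\RR^{mn}$. Integrating both sides against $e^{-2\pi i \ell \cdot x}$ over $[0,1]^{mn}$ and using orthogonality $\int_{[0,1]^{mn}} e^{2\pi i k \cdot x}\,dx = \mathbf{1}_{k=0}$ (valid for $k \in \ZZ^{mn}$) collapses the series to the single equation $tq^T = \ell$. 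The interchange of sum and integral is legitimate by the absolute summability of $\widehat{\phi}(\epsilon t)$ and dominated convergence.

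The condition $tq^T = \ell$ is equivalent to $\ell_{ij} = t_i q_j$ for all $i,j$, which by the definition of $D(\ell)$ is precisely the statement $q \in D(\ell)$, with the integer vector $t$ playing the role of $k$ in that definition. If no such $t$ exists, the coefficient is $0$ and we obtain the second case of the lemma. When such a $t$ does exist and $q \neq 0$, uniqueness is immediate: picking any index $j$ with $q_j \neq 0$ we have $t_i = q_j^{-1}\ell_{ij}$, so $t = q_j^{-1} \ell_j$ with $\ell_j = (\ell_{1j},\ldots,\ell_{mj})$. Substituting this value of $t$ into the sum gives exactly
$$
\widehat{\Phi^{\epsilon}_{q,\theta}}(\ell) \;=\; e^{-2\pi i q_j^{-1} \ell_j \cdot \theta}\, \widehat{\phi}(\epsilon q_j^{-1} \ell_j),
$$
which is the first case.

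There is really no hard step here; the argument is structural. The only point needing genuine care is the bookkeeping in passing from $t \cdot (xq)$ on $\TT^m$ to $(tq^T) \cdot x$ on $\TT^{mn}$, and in recognizing that the resulting lattice condition $tq^T = \ell$ matches the definition of $D(\ell)$ verbatim. I expect the main "obstacle" to be purely notational: making sure the conventions for the Fourier coefficient (the $e^{-2\pi i \ell \cdot x}$ in the integral versus $e^{+2\pi i t \cdot y}$ in the Fourier series), and for the identification of $m \times n$ matrices with $\RR^{mn}$, are applied consistently throughout.
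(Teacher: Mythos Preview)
Your argument is correct and matches the paper's proof essentially line for line: expand $\Phi^{\epsilon}$ as an absolutely convergent Fourier series via periodization, substitute $y = xq - \theta$, integrate against $e^{-2\pi i \ell \cdot x}$, and read off that only the term with $tq^T = \ell$ survives, which is exactly the condition $q \in D(\ell)$ with $t = q_j^{-1}\ell_j$. The only cosmetic difference is that the paper writes out the orthogonality as a product of one-dimensional integrals $\prod_{i,j}\int_0^1 e^{2\pi i x_{ij}(k_i q_j - \ell_{ij})}\,dx_{ij}$ rather than invoking the vector identity $t\cdot(xq)=(tq^T)\cdot x$ directly.
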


\begin{proof}
By direct computation, 
\begin{align*}
\widehat{\Phi^{\epsilon}}(k) = \widehat{\phi^{\epsilon}}(k) = \widehat{\phi}(\epsilon k) \quad \forall k \in \ZZ^m. 
\end{align*}
It follows that $\int_{[0,1]^{m}} |\Phi^{\epsilon}(x)| dx = \int_{\RR^m} \phi(x) dx  < \infty$ and $\sum_{k \in \ZZ^m} |\widehat{\Phi^{\epsilon}}(x)| = \sum_{k \in \ZZ^m} |\widehat{\phi}(\epsilon k)| < \infty$. 
Thus, Fourier inversion (see, e.g., \cite[p.169]{grafakos}) gives 
\begin{align*}%%%\label{F-series-1}
\Phi^{\epsilon}(x) = \sum_{k \in \ZZ^m} \widehat{\phi}(\epsilon k) e^{ 2 \pi i k x } \quad \text{for a.e. } x \in \RR^{m} 
\end{align*}
and the series on the right  converges uniformly to a continuous function. 
Therefore, for each $q \in \ZZ^n$, 
\begin{align*}%%%\label{F-series-2} 
\Phi^{\epsilon}_{q,\theta}(x) = \Phi^{\epsilon}(qx-\theta) = \sum_{k \in \ZZ^m} \widehat{\phi}(\epsilon k) e^{ 2 \pi i k \cdot (xq - \theta) } \quad \text{for a.e. } x \in \RR^{mn} 
\end{align*}
and the series on the right converges uniformly to a continuous function. 
By uniform convergence, 
\begin{align*}%\label{111}
\widehat{\Phi^{\epsilon}_{q,\theta}}(\ell) 
&=
\sum_{k \in \ZZ^m}  \int_{[0,1]^{mn}} \widehat{\phi}(\epsilon k) e^{2 \pi i k \cdot (xq - \theta)} e^{- 2 \pi i \ell \cdot x} dx \\
\notag
&=
\sum_{k \in \ZZ^m}  e^{- 2 \pi i k \cdot \theta } \widehat{\phi}(\epsilon k) \int_{[0,1]^{mn}} e^{2 \pi i (k \cdot (xq - \theta) - \ell \cdot x)} dx \\
%\notag
%&=
%\sum_{k \in \ZZ^m}  e^{- 2 \pi i k \cdot \theta } \widehat{\phi}(\epsilon k) \int_{[0,1]^{mn}} e^{2 \pi i \sum_{i=1}^{m} \sum_{j=1}^{n}x_{ij}(k_i q_j - \ell_{ij})} dx \\
\notag
&=
\sum_{k \in \ZZ^m}  e^{- 2 \pi i k \cdot \theta } \widehat{\phi}(\epsilon k) \prod_{i=1}^{m} \prod_{j=1}^{n} \int_{[0,1]} e^{2 \pi i x_{ij}(k_i q_j - \ell_{ij})} dx_{ij}. \\
\notag
\end{align*}
If $k_i q_j = \ell_{ij}$ for every $i,j$, the product of the integrals is $1$. 
Otherwise, the product is $0$. Whenever $q_j$ is non-zero, $k_i = q_j^{-1} \ell_{ij}$ for every $i$, i.e., $k = q_j^{-1} \ell_j$. 
\end{proof}

\subsection{Single-Scale Function}\label{sec-FM}

For $M > 0$ and $x \in \RR^{mn}$, define 
\begin{align*}
F_M(x) = \frac{1}{|Q'(M)|} \sum_{q \in Q'(M)} \Phi^{\Psi(q)}_{q,\theta}(x). 
\end{align*}
Note $F_{M}$ 
%%%is $C^{K}(\RR^{mn})$ and 
is periodic for the lattice $\ZZ^{mn}$.

\begin{lemma}\label{FMhat lemma}
For every $M > 0$ and $\ell \in Z^{mn}$, $\ell \neq 0$, 
\begin{align*}%%%\label{FMhat}
\widehat{F_{M}}(\ell) = \frac{1}{|Q'(M)|} \sum_{q \in Q'(M) \cap D(\ell)} e^{-2 \pi i q_{1}^{-1} \ell_{1} \cdot \theta} \widehat{\phi}( \Psi(q) q_{j_0}^{-1} \ell_{j_0} )
\quad \forall \ell \in \ZZ^{mn}.
\end{align*}
where $j_0$ is chosen so that $|\ell| = |\ell_{j_0}|$.
\end{lemma}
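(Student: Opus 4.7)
The plan is a direct computation using linearity of the Fourier transform on the torus together with the explicit formula from Lemma \ref{Phi-fourier}.

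First I would write, using the definition of $F_M$ and linearity of Fourier coefficients,
\begin{align*}
\widehat{F_M}(\ell) = \frac{1}{|Q'(M)|} \sum_{q \in Q'(M)} \widehat{\Phi^{\Psi(q)}_{q,\theta}}(\ell).
\end{align*}
Before applying Lemma \ref{Phi-fourier} I need to know that $q \neq 0$ for all $q$ in the sum. This follows because $Q'(M) \subseteq Q(M) \subseteq \{q : M/2 < \Psi_*^{-s}(q) \leq M\}$ and $\Psi_*(q) = \Psi(q)|q|^{-1}$ is undefined at $q=0$, so $0 \notin Q(M) \supseteq Q'(M)$. Hence for each $q$ in the sum, at least one coordinate $q_j$ is non-zero.

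Next I would apply Lemma \ref{Phi-fourier} with $\epsilon = \Psi(q)$: for each $q \in Q'(M)$, either $q \notin D(\ell)$ (in which case $\widehat{\Phi^{\Psi(q)}_{q,\theta}}(\ell) = 0$), or $q \in D(\ell)$ and
\begin{align*}
\widehat{\Phi^{\Psi(q)}_{q,\theta}}(\ell) = e^{-2\pi i q_j^{-1} \ell_j \cdot \theta}\, \widehat{\phi}\bigl(\Psi(q)\, q_j^{-1} \ell_j\bigr)
\end{align*}
for any $j$ with $q_j \neq 0$. Here I would observe the key consistency fact: when $q \in D(\ell)$, the definition of $D(\ell)$ provides a $k \in \ZZ^m$ with $\ell_j = q_j k$ for every $j$, so $q_j^{-1} \ell_j = k$ is the \emph{same} vector in $\ZZ^m$ for every index $j$ with $q_j \neq 0$. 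Thus the right-hand side is independent of the choice of such $j$, and in particular the displayed formula makes sense whether we pick $j=1$ or $j=j_0$ (where $|\ell|=|\ell_{j_0}|$), provided the coordinate we pick is non-zero.

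To justify the specific choice $j = j_0$ in the statement, note that since $\ell \neq 0$ we have $\ell_{j_0} \neq 0$, so if $q \in D(\ell)$ then $\ell_{j_0} = q_{j_0} k$ forces $q_{j_0} \neq 0$; hence $q_{j_0}^{-1} \ell_{j_0}$ is well-defined for every $q \in Q'(M) \cap D(\ell)$. Restricting the sum to this set and substituting gives exactly
\begin{align*}
\widehat{F_M}(\ell) = \frac{1}{|Q'(M)|} \sum_{q \in Q'(M)\cap D(\ell)} e^{-2\pi i q_{j_0}^{-1}\ell_{j_0} \cdot \theta}\, \widehat{\phi}\bigl(\Psi(q)\, q_{j_0}^{-1} \ell_{j_0}\bigr),
\end{align*}
and by the consistency remark $q_{j_0}^{-1}\ell_{j_0} = q_1^{-1}\ell_1$ whenever $q_1 \neq 0$, matching the statement as written. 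There is no real obstacle here; the only point requiring care is the bookkeeping about which coordinate index is used, which is handled by the observation that $q_j^{-1}\ell_j$ is independent of $j$ on $D(\ell)$.
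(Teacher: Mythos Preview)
Your proposal is correct and follows exactly the approach the paper takes: the paper's proof is the single line ``Immediate from Lemma \ref{Phi-fourier},'' and you have simply unpacked that immediacy by writing out the linearity step and the bookkeeping about which coordinate index $j$ is used. Your observation that $q_j^{-1}\ell_j$ is independent of $j$ on $D(\ell)$ is the right way to reconcile the mixed use of $j=1$ and $j=j_0$ in the statement.
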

\begin{proof}
Immediate from Lemma \ref{Phi-fourier}. 
\end{proof}

For part (d) of the next lemma, recall the definition of $M_0$ from Lemma \ref{Q-M-prime-lemma}.

\begin{lemma}\label{FM-Fourier-Bounds} \hspace{1pt}
\hspace{1pt}
\begin{enumerate}[(a)]
\item $\widehat{F_M}(0) = 1$ for every $M > 0$ %\eqref{111.9}

\item $|\widehat{F_M}(\ell)| \leq 1$ for every $M > 0$, $\ell \in \ZZ^{mn}$ %eqref{112}

\item $\widehat{F_M}(\ell) = 0$ for every $M \geq 2$, $\ell \in \ZZ^{mn}$, $0 < |\ell| \leq \rbr{\dfrac{M}{2 \log_2^{n+1}(M)}}^{1/2mn}$ %eqref{113}

\item $|\widehat{F_{M}}(\ell)| \leq C_\zeta |\ell|^{-s} w_{\zeta}(|\ell|) \log^{n+1}(M)$ for every $M \in \mathcal{M}$, $M \geq M_0$, $\ell \in \ZZ^{mn}$, $|\ell| \geq 3$, $\zeta > \log 2$. 
\end{enumerate}
\end{lemma}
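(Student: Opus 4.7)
The plan is to dispose of (a)--(c) as short consequences of Lemma \ref{FMhat lemma} and the construction of $Q'(M)$, and to spend the main effort on (d). For (a), every $q \in \ZZ^n$ lies in $D(0)$ (take $k = 0$), and any $q \in Q'(M) \subseteq Q \setminus \{0\}$ has at least one nonzero coordinate, so Lemma \ref{Phi-fourier} with $\ell = 0$ gives $\widehat{\Phi^{\Psi(q)}_{q,\theta}}(0) = \widehat{\phi}(0) = 1$; averaging over $Q'(M)$ yields $\widehat{F_M}(0) = 1$. For (b), $|\widehat{\phi}(\xi)| \leq \int \phi = 1$ since $\phi \geq 0$ has unit mass, so Lemma \ref{FMhat lemma} gives $|\widehat{F_M}(\ell)| \leq |Q'(M) \cap D(\ell)|/|Q'(M)| \leq 1$. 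For (c), any $\ell$ with $0 < |\ell| \leq (M/(2\log_2^{n+1}M))^{1/2mn}$ lies in $L(M)$, so $D(\ell) \subseteq \bigcup_{\ell' \in L(M)} D(\ell')$ and $Q'(M) \cap D(\ell) = \emptyset$ by construction; Lemma \ref{FMhat lemma} then forces $\widehat{F_M}(\ell) = 0$.

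For (d), fix $M \in \mathcal{M}$ with $M \geq M_0$ and $\ell \in \ZZ^{mn}$ with $|\ell| \geq 3$, and choose $j_0$ with $|\ell_{j_0}| = |\ell|$. I first observe that any $q \in D(\ell)$ must have $q_{j_0} \neq 0$: if $q_{j_0} = 0$, the defining relation $\ell_{i j_0} = q_{j_0} k_i$ forces $\ell_{j_0} = 0$, contradicting $|\ell_{j_0}| > 0$. In the max norm, $|\Psi(q) q_{j_0}^{-1} \ell_{j_0}| = \Psi(q)|\ell|/|q_{j_0}| \geq \Psi_*(q)|\ell|$, so the decay hypothesis \eqref{phi-decay} yields $|\widehat{\phi}(\Psi(q) q_{j_0}^{-1} \ell_{j_0})| \ll (1 + \Psi_*(q)|\ell|)^{-K}$, and since $\Psi_*(q) \geq M^{-1/s}$ for all $q \in Q(M)$, this is at most $(1 + M^{-1/s}|\ell|)^{-K}$ uniformly in $q$. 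Combining Lemma \ref{FMhat lemma}, the divisor bound $|D(\ell)| \leq C_\zeta w_\zeta(|\ell|)$ from Lemma \ref{divisor bound}, and the lower bound $|Q'(M)| \geq M/(4 \log_2^{n+1}M)$ from Lemma \ref{Q-M-prime-lemma} gives
\[
|\widehat{F_M}(\ell)| \ll \frac{C_\zeta\, w_\zeta(|\ell|)\, \log_2^{n+1}(M)}{M}\,(1+M^{-1/s}|\ell|)^{-K}.
\]

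It then remains to show $M^{-1}(1 + M^{-1/s}|\ell|)^{-K} \leq |\ell|^{-s}$, which I handle by splitting at $|\ell|^s = M$. When $|\ell|^s \leq M$, use $(1 + \cdots)^{-K} \leq 1$ together with $1/M \leq |\ell|^{-s}$. When $|\ell|^s > M$, use $(1 + M^{-1/s}|\ell|)^{-K} \leq (M^{-1/s}|\ell|)^{-K} = M^{K/s}|\ell|^{-K}$, so the quantity becomes $M^{(K-s)/s}|\ell|^{-K} \leq |\ell|^{-s}$ precisely because $M^{1/s} < |\ell|$ and $K > s$ (guaranteed by the choice $K > mn+s$ in Subsection \ref{sec-bump}). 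The main obstacle is this step in (d): one has to balance the averaging loss $1/|Q'(M)| \sim \log_2^{n+1}(M)/M$, the divisor bound for $|D(\ell)|$, and the decay of $\widehat{\phi}$ to produce a clean $|\ell|^{-s}$ rate, and pivoting at $|\ell|^s \approx M$ is the natural way to see how the $1/M$ factor and the decay of $\widehat{\phi}$ each contribute $|\ell|^{-s}$ in their respective regimes.
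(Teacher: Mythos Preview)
Your proof is correct and follows essentially the same approach as the paper. The only cosmetic differences are that for (b) the paper uses positivity of $F_M$ to get $|\widehat{F_M}(\ell)|\le\widehat{F_M}(0)=1$ rather than bounding termwise, and for (d) the paper avoids your case split at $|\ell|^s=M$ by using the single chain $(1+\Psi_*(q)|\ell|)^{-K}\le(1+\Psi_*(q)|\ell|)^{-s}\le(\Psi_*(q)|\ell|)^{-s}\le M|\ell|^{-s}$ directly for each $q\in Q'(M)$.
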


\begin{proof}\hspace{1pt}
\begin{enumerate}[(a)]
\item Since $\widehat{\phi}(0) = \int_{\RR^m} \phi(x) dx = 1$ and $D(0) = \ZZ^n$, Lemma \ref{Phi-fourier} implies (a). 
%Since $\phi \geq 0$, we have $F_{M} \geq 0$, which gives the inequality in \eqref{112}. The equality in \eqref{112} is \eqref{111.9}. 
\item Since $\phi \geq 0$, we have $F_{M} \geq 0$, which gives $|\widehat{F_M}(\ell)| \leq \widehat{F_M}(0)$. Then (a) implies (b). 
\item Because of the definition of $Q'(M)$, 
if $0 < |\ell| \leq \rbr{{M}/{(2 \log_2^{n+1}(M))}}^{1/2mn}$, 
then the sum over $q$ in Lemma \ref{FMhat lemma} 
is empty; hence, $\widehat{F_M}(\ell)=0$. 

\item  Let  $\ell \in \ZZ^{mn}$, $|\ell| \geq 3$. Let $M \in \mathcal{M}$, $M \geq M_0$. By Lemma \ref{FMhat lemma} and \eqref{phi-decay}, 
\begin{align*}
\notag |\widehat{F_{M}}(\ell)| 
&\leq \frac{1}{|Q'(M)|} \sum_{q \in Q'(M) \cap D(\ell)} |\widehat{\phi}(\Psi(q) q_{j_0}^{-1} \ell_{j_0})| \\
\notag 
&\leq  \frac{C_\zeta}{|Q'(M)|}  \sum_{q \in Q'(M) \cap D(\ell)}  (1 + \Psi(q) |q|^{-1} |\ell| )^{-K} \\
\notag 
&\leq \frac{C_\zeta}{|Q'(M)|}  \sum_{q \in Q'(M) \cap D(\ell)}  (1 + \Psi(q) |q|^{-1} |\ell| )^{-s} \\
\notag 
&=  \frac{C_\zeta}{|Q'(M)|}  \sum_{q \in Q'(M) \cap D(\ell)} (1 + \Psi_{*}(q) |\ell| )^{-s} \\
\notag
& \leq |\ell|^{-s} \frac{C_\zeta}{|Q'(M)|} \sum_{q \in Q'(M) \cap D(\ell)} \frac{1}{\Psi_{*}(q)^s} \\ 
%\label{longinequality}
%\notag
%& \leq  |\ell|^{-s} \frac{|D(\ell)|}{|Q'(M)|} \cdot \frac{1}{\min_{q \in Q'(M)} \Psi_{*}(q)^s}.
\end{align*}
For all $q \in Q(M)$, we have $1/{\Psi^s_*(q)} \leq M$, so 
%\begin{align}%\label{factor2}
%\notag
%\frac{1}{\min_{q \in Q'(M)} \Psi_{*}(q)^s} \leq M.
%\end{align}

$$
|\widehat{F_{M}}(\ell)| \leq |\ell|^{-s} \frac{C_\zeta}{|Q'(M)|} |D(\ell)| M
$$
%To bound ${|D(\ell)|}/{|Q'(M)|}$, we apply Lemma \ref{divisor bound} and Lemma \ref{Q-M-prime-lemma}. 
The desired result now follows from Lemma \ref{divisor bound} and Lemma \ref{Q-M-prime-lemma}. 
\end{enumerate}
\end{proof}

\begin{lemma}\label{support lemma} \hspace{1pt}
\begin{enumerate}[(a)]
\item For every $M > 0$, 
\begin{align*}
%%\label{2.6}
\supp(F_M) \subseteq 
\bigcup_{q \in Q'(M)} \bigcup_{r \in \ZZ^m} 
\{ x \in \RR^{mn} : |xq-r-\theta|  \leq \Psi(q)   \}. 
\end{align*}
\item For every sequence $(M_k)$ that satisfies $0 < 2M_k \leq M_{k+1}$ for all $k \in \NN$, 
\begin{align*}
%%\label{2.7}
\bigcap_{k=1}^{\infty} \supp(F_{M_k}) \subseteq E(m,n,Q',\Psi,\theta). 
\end{align*}
\end{enumerate}
\end{lemma}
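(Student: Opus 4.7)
The plan is to deduce (a) directly from the bump-function structure of $F_M$, and then to obtain (b) from (a) by exploiting the pairwise disjointness of the intervals defining $Q'(M_k)$.

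For (a), I unpack the definition:
\[
F_M(x) = \frac{1}{|Q'(M)|} \sum_{q \in Q'(M)} \sum_{r \in \ZZ^m} \phi^{\Psi(q)}(xq - \theta - r).
\]
All terms are non-negative since $\phi \geq 0$, and the $(q,r)$-term vanishes unless $|xq-r-\theta| < \Psi(q)$, because $\supp(\phi^{\Psi(q)}) \subseteq (-\Psi(q), \Psi(q))^m$. Hence $\{F_M > 0\}$ is contained in the union of the open slabs $\{|xq - r - \theta| < \Psi(q)\}$ over $q \in Q'(M)$ and $r \in \ZZ^m$. The corresponding union of closed slabs $\{|xq - r - \theta| \leq \Psi(q)\}$ is itself closed (for each fixed $q$, consecutive slabs are separated since $\Psi(q) \leq 1/2$, and the union over $q \in Q'(M)$ is finite), so taking closures delivers the inclusion in (a).

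For (b), let $x \in \bigcap_k \supp(F_{M_k})$. Applying (a) at each $k$ yields $q_k \in Q'(M_k)$ and $r_k \in \ZZ^m$ with $|xq_k - r_k - \theta| \leq \Psi(q_k)$. The central point is that the $q_k$'s are pairwise distinct: any $q \in Q'(M_k) \subseteq Q(M_k)$ satisfies $\Psi_*^{-s}(q) \in (M_k/2, M_k]$, and the hypothesis $2 M_k \leq M_{k+1}$ makes these intervals pairwise disjoint, so no single $q$ can belong to two different $Q(M_k)$'s. Thus $x$ satisfies the Diophantine inequality for infinitely many distinct pairs $(r_k, q_k) \in \ZZ^m \times Q'$, placing $x$ in $E(m,n,Q',\Psi,\theta)$.

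The sole (minor) subtlety is the mismatch between the $\leq \Psi(q)$ that a closed support naturally delivers and the strict $< \Psi(q)$ appearing in the definition of $E$. This boundary issue is cosmetic: the two sets differ only on a measure-zero collection of hyperplanes, which is harmless for the subsequent Fourier-decay arguments; alternatively, one may inflate $\Psi(q)$ to $(1+\epsilon_k)\Psi(q)$ on the $k$-th layer with $\epsilon_k \downarrow 0$, which affects neither the Fourier estimates on $F_M$ nor the set $E$.
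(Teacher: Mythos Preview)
Your argument is correct and follows the same two-step strategy as the paper: bump-function support forces $x$ into a slab for some $q\in Q'(M)$, and disjointness of the sets $Q'(M_k)$ (from disjointness of the intervals $(M_k/2,M_k]$) yields infinitely many distinct $q$. The one place the paper proceeds more cleanly is the strict-versus-non-strict issue you flag at the end: rather than patch it after the fact, the paper uses that $\supp(\phi)$ is a compact subset of the \emph{open} cube $(-1,1)^m$, hence lies in $[-c,c]^m$ for some $c<1$; this gives $\{F_M>0\}\subseteq\bigcup\{|xq-r-\theta|\leq c\Psi(q)\}$, a closed set already contained in $\bigcup\{|xq-r-\theta|<\Psi(q)\}$, so the strict inequality needed for (b) comes for free and your final paragraph becomes unnecessary. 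Your reason for closedness of the slab union (finiteness of $Q'(M)$ plus periodicity in $r$) is a perfectly good alternative to the paper's device of bounding $|r|$ along a convergent sequence.
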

\begin{proof}
Observe that, for all $x \in \RR^{mn}$, 
\begin{align*}
F_{M}(x) = \frac{1}{|Q'(M)|} \sum_{q \in Q'(M)} \sum_{r \in \ZZ^m} \Psi(q)^{-m}\phi(\Psi(q)^{-1}(xq - r - \theta)). 
\end{align*}
Since $\supp(\phi) \subseteq (-1,1)^m$, there is some constant $0 < c < 1$ such that $\supp(\phi) \subseteq [-c,c]^m$. 
If $F_M(x)>0$, then there is some $q \in Q'(M)$ and some $r \in \ZZ^m$ such that $\phi(\Psi(q)^{-1}(xq - r - \theta)) > 0$. 
Hence $|xq-r-\theta| \leq c\Psi(q)$ 
and   
%$$
$
|r| 
%\leq c\Psi(q) + |\theta| + |xq| 
\leq c\Psi(q) + |\theta| + |x| |q|_1.
$ 
%$$
Now suppose $x' \in \supp(F_M)$. 
Choose a sequence $(x_k)$ that converges to $x'$ and satisfies $F_M(x_k) > 0$ for all $k$. 
Since $(x_k)$ converges, there is some constant $C>0$ such that $|x_k| \leq C$ for all $k$. Thus $(x_k)$ is contained in the set 
$$
\bigcup_{q \in Q'(M)} \bigcup_{\substack{ r \in \ZZ^m \\ |r| \leq c\Psi(q) + |\theta| + C|q|_1 } } \{ x \in \RR^{mn} : |xq-r-\theta|  \leq c\Psi(q)   \}
$$
Since this set is closed, it contains $x'$. 
This proves (a). 
If $(M_k)$ satisfies $2M_k \leq M_{k+1}$ for all $k$, 
then the sets $Q'(M_k)$ are pairwise disjoint, 
so (b) follows from (a). 
\end{proof}

\subsection{Stability Lemma}\label{main lemma}

%Now we prove a lemma that will 
The next lemma will 
let us pass the Fourier decay of the function $F_M$ to the measure $\mu$.

\begin{lemma}\label{main-lemma}
Define
$$
g(\xi) = \begin{cases}
1 & \text{if } |\xi| \leq 3, \\
|\xi|^{-s} w_1(|\xi|)\log^{n+1}(|\xi|)  & \text{if } |\xi| > 3.
\end{cases}
$$
%w_1(|\xi|/2) < w_1(|\xi|), so make it \xi to make it look nicer? it doesn't matter of course
%
For every $\delta > 0$, $M_0 > 0$, and $\chi \in C^{K}_{c}(\RR^{mn})$, there is an 
%$M_{\ast} = M_{\ast}(\delta, M_0, \chi) \in \mathcal{M}$ 
$M_{\ast}(\delta, M_0, \chi) \in \mathcal{M}$ 
and $C_{\chi} >0$ such that $M_{\ast} \geq M_0$ and 
\begin{align*}
|\widehat{\chi F_{M_{\ast}}}(\xi) - \widehat{\chi}(\xi)| \leq \delta g(\xi)  \quad \forall \xi \in \RR^{mn}. 
\end{align*} 
\end{lemma}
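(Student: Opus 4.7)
The plan is to expand $F_M$ in Fourier series so that $\widehat{\chi F_{M}}(\xi)$ becomes a convolution of $\widehat{\chi}$ against the Fourier coefficients, and then to balance Lemma \ref{FM-Fourier-Bounds} against the rapid decay of $\widehat{\chi}$. A routine computation (using $|\widehat{\phi}(\xi)| \ll (1+|\xi|)^{-K}$ with $K > mn$ together with the divisor bound in Lemma \ref{divisor bound}) shows $\sum_{\ell} |\widehat{F_M}(\ell)| < \infty$, so
\[ \widehat{\chi F_{M}}(\xi) - \widehat{\chi}(\xi) = \sum_{\ell \neq 0} \widehat{F_{M}}(\ell)\, \widehat{\chi}(\xi - \ell), \]
where the $\ell = 0$ term cancels via part (a). Part (c) eliminates every $0 < |\ell| \leq R(M) := (M/(2\log_2^{n+1}(M)))^{1/(2mn)}$, reducing the problem to bounding the tail $\sum_{|\ell| > R}$ by $\delta g(\xi)$ uniformly in $\xi$, for some $M_* \in \mathcal{M}$ (unbounded by Lemma \ref{Q-M-lemma}) satisfying $M_* \geq M_0$.

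Fix $\zeta \in (\log 2, 1)$, so $w_\zeta \leq w_1$; part (d) then supplies $|\widehat{F_M}(\ell)| \ll |\ell|^{-s} w_\zeta(|\ell|) \log^{n+1}(M)$ for $|\ell| \geq 3$. I would split into two regimes. When $|\xi| \leq R/2$, every $|\ell| > R$ has $|\xi - \ell| \geq R/2$; extracting the factor $(1+R/2)^{-(K-mn-1)}$ from $(1+|\xi-\ell|)^{-K}$, using $\sum_\ell (1+|\xi-\ell|)^{-(mn+1)} \ll 1$, and a crude tail estimate on $\sum_{|\ell|>R} |\ell|^{-s+o(1)}$ produces a Case I bound of order $R^{-K+mn-s+o(1)} \log^{n+1}(M)$. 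Since $g(\xi) \gtrsim R^{-s} \log^{n+1}(R)$ on this regime and $K > mn + s$, this is dominated by $\delta g(\xi)$ once $M$ is large enough. When $|\xi| > R/2$, the dominant contribution comes from $|\ell| \sim |\xi|$, producing a bound $\ll |\xi|^{-s} w_\zeta(|\xi|) \log^{n+1}(M)$; contributions from $|\ell| \leq |\xi|/2$ and $|\ell| \geq 2|\xi|$ are controlled by the same peeling trick used in Case I and are negligible.

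The main obstacle is absorbing $\log^{n+1}(M) \cdot w_\zeta(|\xi|)$ into $\log^{n+1}(|\xi|) \cdot w_1(|\xi|)$ in this large-$|\xi|$ regime. The resolution hinges on two observations. First, $|\xi| > R/2 \sim M^{1/(2mn)}$ forces $\log M \leq 2mn \log|\xi| + O(1)$, so $\log^{n+1}(M)/\log^{n+1}(|\xi|)$ stays bounded by a constant depending only on $m,n$. Second, since $\zeta < 1$,
\[ \frac{w_\zeta(|\xi|)}{w_1(|\xi|)} = \exp\!\left( \frac{(\zeta-1)\log|\xi|}{\log\log|\xi|} \right) \longrightarrow 0 \quad \text{as } |\xi| \to \infty, \]
and hence uniformly over $|\xi| > R(M)/2$ once $M$ is large enough. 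Combining the two observations makes the Case II estimate $\leq \delta g(\xi)$, and choosing $M_* \in \mathcal{M}$ with $M_* \geq M_0$ large enough to simultaneously satisfy both case estimates completes the proof.
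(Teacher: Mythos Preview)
Your proposal is correct and follows essentially the same route as the paper: expand $F_M$ in Fourier series, reduce to $|\ell| > R(M)$ via parts (a) and (c) of Lemma \ref{FM-Fourier-Bounds}, split into the regimes $|\xi| \leq R/2$ and $|\xi| > R/2$, and in the latter absorb $\log^{n+1}(M)\,w_\zeta(|\xi|)$ into $\log^{n+1}(|\xi|)\,w_1(|\xi|)$ exactly as you describe. The only notable difference is that the paper uses the trivial bound $|\widehat{F_M}(\ell)| \leq 1$ from part (b) rather than part (d) whenever $|\xi-\ell|$ is forced to be large (your Case I and the sub-range $|\ell| \leq |\xi|/2$ of Case II), which cleans up the bookkeeping and sidesteps the slightly tangled description you give for Case I.
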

Note: The proof will show $M_{\ast}$ can be taken to be any sufficiently large element of $\mathcal{M}$.

\begin{proof}
%The idea of the proof is to first rewrite $\widehat{\chi F_{M_{\ast}}}(\xi) - \widehat{\chi}(\xi)$ as $\sum_{|\ell| > M/2} \widehat{\chi}(\xi-\ell) \widehat{F_M}(\ell)$, and then estimate the sum using the decay of $\widehat{\chi}$ and $\widehat{F_M}$.
We start with two facts. 
First, since $\chi \in C^{K}_{c}(\RR^{mn})$, there is a $C_{\chi} > 0$ such that
\begin{align}\label{108}
|\widehat{\chi}(\xi)| \leq C_{\chi}(1+|\xi|)^{-K} \quad \forall \xi \in \RR^{mn}.
\end{align}
Second, for every $p > mn$, we have
\begin{align}\label{108-2}
\sup_{\xi \in \RR^{mn}} \sum_{\ell \in \ZZ^{mn}} (1+|\xi - \ell|)^{-p} < \infty.
\end{align}

Now we expand the expression $\widehat{\chi F_{M_{\ast}}}(\xi) - \widehat{\chi}(\xi)$.  
Since $F_M$ is $C^{K}$ and $\ZZ^{mn}$-periodic, we have
$$
F_M(x) = \sum_{\ell \in \ZZ^{mn}} \widehat{F_M}(\ell) e^{2 \pi i \ell \cdot x} \quad \forall x \in \RR^{mn}
%:= \lim_{N \rightarrow \infty} \sum_{\ell \in \ZZ^n \cap [-N,N]^n} \widehat{F_M}(k) e^{2 \pi i \ell \cdot x}.
$$
with uniform convergence. 
Since $\chi \in L^1(\RR^{mn})$, multiplying by $\chi$ and taking the Fourier transform yields 
\begin{align*}
\widehat{\chi F_M}(\xi) 
= \sum_{\ell \in \ZZ^{mn}} \widehat{F_M}(\ell) \int_{\RR^{mn}} \chi(x) e^{2 \pi i (\ell - \xi) \cdot x} dx
= \sum_{\ell \in \ZZ^{mn}} \widehat{F_M}(\ell) \widehat{\chi}(\xi-\ell)
\end{align*}
for all $\xi \in \RR^{mn}$.
Then by Lemma \ref{FM-Fourier-Bounds} (a) and (c) we have 
\begin{align}\label{110-2}
\widehat{\chi F_{M}}(\xi) - \widehat{\chi}(\xi)
=
\sum_{\ell \in \ZZ^{mn}} \widehat{\chi}(\xi-\ell) \widehat{F_M}(\ell) - \widehat{\chi}(\xi) %\\
=
\sum_{|\ell| > h} \widehat{\chi}(\xi-\ell) \widehat{F_M}(\ell). 
\end{align}%

Define $h = \rbr{{M}/{(2 \log_2^{n+1}(M))}}^{1/2mn}$. Note $h \rightarrow \infty$ as $M \rightarrow \infty$.
Define $\eta = (K-mn-s)/2$. Recall that $K$ was chosen to be greater than $mn+s$, so $\eta$ is positive.
To estimate $\widehat{\chi F_{M}}(\xi) - \widehat{\chi}(\xi)$, 
we 
%use \eqref{110-2} and  
consider two cases.

\textbf{Case 1:} Assume $|\xi| < h/2$. If $|\ell| > h$, then $|\xi - \ell| > h/2 > |\xi|$. Hence by Lemma \ref{FM-Fourier-Bounds}(b), \eqref{108},\eqref{108-2}, and \eqref{110-2} we have
\begin{align*}
| \widehat{\chi F_{M}}(\xi) - \widehat{\chi}(\xi) |
&\leq
C_{\chi} \sum_{|\ell| > h} (1+|\xi - \ell|)^{-K}
= 
C_{\chi} \sum_{|\ell| > h} (1+|\xi - \ell|)^{-s-\eta-(mn+\eta)} \\
&\leq
%%%%C_{\chi} (1 + |\xi|)^{-a} (1 + M/4)^{-(K - a - mn)/2} \sum_{|\ell| > M/2} (1+|\xi - \ell|)^{-mn-(K - a - mn)/2} \\
C_{\chi} (1 + |\xi|)^{-s} (1 +h/2)^{-\eta} \sum_{|\ell| > h} (1+|\xi - \ell|)^{-(mn+\eta)} 
%\\
%&
\leq
\delta g(\xi)
\end{align*}
for all sufficiently large M. 

\textbf{Case 2:} Assume $|\xi| \geq h/2$. 
%Then $|\xi|  \geq h/2 = \rbr{{M}/{(2 \log_2^{n+1}(M))}}^{1/2mn}/2 \geq M^{1/2mn}/2$. 
Thus $(2|\xi|)^{2mn} \geq M$. 
Using \eqref{110-2}, write 
$$
\widehat{\chi F_{M}}(\xi) - \widehat{\chi}(\xi) 
= \sum_{\substack{|\ell| > h \\ |\ell| \leq |\xi|/2}} \widehat{\chi}(\xi-\ell)  \widehat{F_M}(\ell) 
+ \sum_{\substack{|\ell| > h \\ |\ell| > |\xi|/2}} \widehat{\chi}(\xi-\ell)  \widehat{F_M}(\ell) 
= S_1 + S_2.
$$
If $|\ell| \leq |\xi|/2$, then $|\xi - \ell| \geq |\xi|/2 \geq h/4$. 
%Hence by \eqref{112}, \eqref{108}, \eqref{108-2} and because $K > mn + a$ we have
Hence by Lemma \ref{FM-Fourier-Bounds}(b), \eqref{108}, and \eqref{108-2} we have
\begin{align*}
|S_1|
&\leq
C_{\chi} \sum_{\substack{|\ell| > h \\ |\ell| \leq |\xi|/2}} (1+|\xi - \ell|)^{-K} 
%%%\\
= C_{\chi} \sum_{\substack{|\ell| > h \\ |\ell| \leq |\xi|/2}} (1+|\xi - \ell|)^{-s-\eta-(mn+\eta)} \\
&\leq
C_{\chi} (1 + |\xi|/2)^{-s} (1 + h/4)^{-\eta} \sum_{\substack{|\ell| > h \\ |\ell| \leq |\xi|/2}} (1+|\xi - \ell|)^{-(mn+\eta)} 
%\\
%%%%C_{\chi} (1 + |\xi|/2)^{-a} (1 + M/8)^{-(K - a - mn)/2} \sum_{\substack{|\ell| > M/2 \\ |\ell| \leq |\xi|/2}} (1+|\xi - \ell|)^{-mn-(K - a - mn)/2} 
%%\\
%%%&\leq
%%%\frac{\delta}{2} (1 + |\xi|)^{-a}
\leq 
\frac{\delta}{2} g(\xi)
\end{align*}
for all sufficiently large $M$. 

%
%Fix $\ln 2 < \zeta < 1$. By \eqref{main-thm-2 e1}, \eqref{114}, \eqref{108}, \eqref{108-2} and because $K > mn$ we have
%\begin{align*}
%|S_2|
%&\leq
%2C_1 C_{\chi} \sum_{\substack{|\ell| \geq M/2 \\ |\ell| > |\xi|/2 }} \frac{f_{\zeta}(|\ell|)}{|Q(M)|} (1 + \epsilon(M)  M^{-1} |\ell| )^{-K} (1+|\xi - \ell|)^{-K} \\
%&\leq
%2C_1 C_{\chi} \sum_{\substack{|\ell| \geq M/2 \\ |\ell| > |\xi|/2}} |\ell|^{-a} f_{\zeta}(|\ell|) \frac{(\epsilon(M)M^{-1})^{-a}}{|Q(M)|}  (1+|\xi - \ell|)^{-K} \\
%&\leq
%2C_1 C_{\chi} (|\xi|/2)^{-a} f_{\zeta}(|\xi|/2) h(M) \sum_{\substack{|\ell| \geq M/2 \\ |\ell| > |\xi|/2}}  (1+|\xi - \ell|)^{-K} \\
%%%%&\leq \frac{\delta}{2} (1+|\xi|)^{-a} f_{1}(|\xi|) h(4|\xi|)
%&\leq \frac{\delta}{2} g(\xi)
%\end{align*}
%for all sufficiently large $M \in \mathcal{M}$.
%
Fix $\log 2 < \zeta < 1.$ Note that $|\ell|^{-s} w_\zeta(|\ell|)$ is an eventually decreasing function. 
%By \ref{FM-Fourier-Bounds}(d), \eqref{108}, \eqref{108-2} and because $K > mn$ 
By \ref{FM-Fourier-Bounds}(d), \eqref{108}, and \eqref{108-2},  
we have
\begin{align*}
|S_2|
&\leq
C_{\zeta} C_{\chi} \sum_{\substack{|\ell| > h \\ |\ell| > |\xi|/2}} |\ell|^{-s} w_{\zeta}(|\ell|) \log^{n+1}(M)  (1+|\xi - \ell|)^{-K} \\
&\leq
C_{\zeta} C_{\chi} (|\xi|/2)^{-s} w_{\zeta}(|\xi|/2) \log^{n+1}((2 |\xi|)^{2mn}) \sum_{\substack{|\ell| > h \\ |\ell| > |\xi|/2}}  (1+|\xi - \ell|)^{-K} \\
%%%C_{\zeta} C_{\chi} (|\xi|/2)^{-s} (|\xi|/2)^{s/3} (|\xi|/2)^{s/3} \sum_{\substack{|\ell| > h \\ |\ell| > |\xi|/2}}  (1+|\xi - \ell|)^{-K} \\
%&\leq
%C_{\zeta} C_{\chi} (|\xi|/2)^{-s} w_{\zeta}(|\xi|/2) \log^{n+1}((|\xi|)^{2mn+1}) \sum_{\substack{|\ell| > h \\ |\ell| > |\xi|/2}}  (1+|\xi - \ell|)^{-K} \\
%&\leq
%C_{\zeta} C_{\chi} (2mn+1) (|\xi|/2)^{-s} w_{\zeta}(|\xi|/2) \log^{n+1}(|\xi|) \sum_{\substack{|\ell| > h \\ |\ell| > |\xi|/2}}  (1+|\xi - \ell|)^{-K} \\
&\leq \frac{\delta}{2}(|\xi|)^{-s} w_{1}(|\xi|) \log^{n+1}(|\xi|) = \frac{\delta}{2} g(\xi)
\end{align*}
for all sufficiently large $M \in \mathcal{M}$.

\end{proof}

\subsection{The Measure $\mu$}\label{completing the proof}

To complete the proof of Proposition \ref{main-lower-bound}, the measure $\mu$ with the desired Fourier decay and support properties (see Subsection \ref{sec-FM-prelim}) 
is constructed the same way as in \cite{hambrook-transactions}.
Arbitrarily fix $f_0 \in C_{c}^{K}(\RR^{mn})$ with $\int_{\RR^{mn}} f_0(x)dx = 1$, $\text{supp}(f_0) = [-1,1]^{mn}$, and $f_0(x) > 0$ for all $|x|<1$. 
Use Lemma \ref{main-lemma} to define numbers 
$$
M_1 = M_{\ast}(2^{-1},1,f_0), \quad M_{k} = M_{\ast}(2^{-k-1},2M_{k-1},f_0F_{M_1}\cdots F_{M_{k-1}}) \quad \forall k \in \NN, k \neq 1.
$$
and measures 
$$
d\mu_0 = f_0 dx, \quad d\mu_k = f_0  F_{M_1} \cdots F_{M_{k}} dx \quad \forall k \in \NN
$$
such that 
\begin{align*}%\label{7}
|\widehat{\mu_k}(\xi) - \widehat{\mu_{k-1}}(\xi)| \leq 2^{-k-1} g(\xi) \quad \forall \xi \in \RR^{mn}, k \in \NN. 
\end{align*}
L\'{e}vy's continuity theorem implies  
$(\mu_k)_{k=0}^{\infty}$ converges weakly 
(i.e., in distribution) to a finite non-zero Borel measure $\mu$ satisfying 
%\eqref{mu-decay}. 
$|\widehat{\mu}(\xi)| = o(|\xi|^{-s})$ as $\xi \to \infty$. 
Multiplying by an appropriate constant, makes $\mu$ a probability measure. 
Lemma \ref{support lemma} implies $\supp(\mu) \subseteq E(m,n,Q,\Psi,\theta)$. 
Since there is nothing new about the construction, we omit further the details.

\bibliographystyle{myplain}
\bibliography{Master}

\begin{thebibliography}{10}

\bibitem{allen-ramirez}
D.~Allen and F.~A. Ramírez.
\newblock {Independence Inheritance and Diophantine Approximation for Systems
  of Linear Forms}.
\newblock {\em International Mathematics Research Notices}, 2023(2):1760--1794,
  06 2022.

\bibitem{BV-slicing}
V.~Beresnevich and S.~Velani.
\newblock Schmidt's theorem, {H}ausdorff measures, and slicing.
\newblock {\em Int. Math. Res. Not.}, pages Art. ID 48794, 24, 2006.

\bibitem{BRV-2016}
V.~Beresnevich, F.~Ramírez, and S.~Velani.
\newblock {\em Metric Diophantine Approximation: Aspects of Recent Work}, page
  1–95.
\newblock London Mathematical Society Lecture Note Series. Cambridge University
  Press, 2016.

\bibitem{berenevich-velani-kg}
V.~Beresnevich and S.~Velani.
\newblock {Classical Metric Diophantine Approximation Revisited: The
  Khintchine–Groshev Theorem}.
\newblock {\em International Mathematics Research Notices}, 2010(1):69--86, 08
  2009.

\bibitem{Bes}
A.~S. Besicovitch.
\newblock Sets of {F}ractional {D}imensions ({IV}): {O}n {R}ational
  {A}pproximation to {R}eal {N}umbers.
\newblock {\em J. London Math. Soc.}, S1-9(2):126, 1934.

\bibitem{bluhm-1}
C.~Bluhm.
\newblock Random recursive construction of {S}alem sets.
\newblock {\em Ark. Mat.}, 34(1):51--63, 1996.

\bibitem{bovey-dodson}
J.~D. Bovey and M.~M. Dodson.
\newblock The {H}ausdorff dimension of systems of linear forms.
\newblock {\em Acta Arith.}, 45(4):337--358, 1986.

\bibitem{chen-seeger}
X.~Chen and A.~Seeger.
\newblock Convolution powers of {S}alem measures with applications.
\newblock {\em Canad. J. Math.}, 69(2):284--320, 2017.

\bibitem{Duffin1941KhintchinesPI}
R.~J. Duffin and A.~C. Schaeffer.
\newblock {K}hintchine’s problem in metric {D}iophantine approximation.
\newblock {\em Duke Mathematical Journal}, 8:243--255, 1941.

\bibitem{hambrook-fraser-Rn}
R.~Fraser and K.~Hambrook.
\newblock Explicit salem sets in $\mathbb{R}^n$.
\newblock {\em arXiv preprint arXiv:1909.04581}, 2019.

\bibitem{grafakos}
L.~Grafakos.
\newblock {\em Classical Fourier Analysis}.
\newblock Springer, 2008.

\bibitem{hambrook-transactions}
K.~Hambrook.
\newblock Explicit {S}alem sets and applications to metrical {D}iophantine
  approximation.
\newblock {\em Trans. Amer. Math. Soc.}, 371(6):4353--4376, 2019.

\bibitem{hambrook-yu}
K.~Hambrook and H.~Yu.
\newblock {Non-Salem Sets in Metric Diophantine Approximation}.
\newblock {\em International Mathematics Research Notices}, 07 2022.
\newblock rnac206.

\bibitem{HW}
G.~H. Hardy and E.~M. Wright.
\newblock {\em An introduction to the theory of numbers}.
\newblock Oxford University Press, Oxford, sixth edition, 2008.
\newblock Revised by D. R. Heath-Brown and J. H. Silverman, With a foreword by
  Andrew Wiles.

\bibitem{Jarnik29}
V.~Jarn\'{\i }k.
\newblock {D}iophantischen {A}pproximationen und {H}ausdorffsches {M}ass.
\newblock {\em Mat. Sborjnik}, 36:371--382, 1929.

\bibitem{kahane-1966-brownian}
J.-P. Kahane.
\newblock Images browniennes des ensembles parfaits.
\newblock {\em C. R. Acad. Sci. Paris S\'er. A-B}, 263:A613--A615, 1966.

\bibitem{kahane-1966-fourier}
J.-P. Kahane.
\newblock Images d'ensembles parfaits par des s\'eries de {F}ourier
  gaussiennes.
\newblock {\em C. R. Acad. Sci. Paris S\'er. A-B}, 263:A678--A681, 1966.

\bibitem{kahane-book}
J.-P. Kahane.
\newblock {\em Some random series of functions}, volume~5 of {\em Cambridge
  Studies in Advanced Mathematics}.
\newblock Cambridge University Press, Cambridge, second edition, 1985.

\bibitem{Kaufman}
R.~Kaufman.
\newblock On the theorem of {J}arn\'\i k and {B}esicovitch.
\newblock {\em Acta Arith.}, 39(3):265--267, 1981.

\bibitem{Khintchine1924}
A.~Khintchine.
\newblock Einige sätze über kettenbrüche, mit anwendungen auf die theorie
  der diophantischen approximationen.
\newblock {\em Mathematische Annalen}, 92:115--125, 1924.

\bibitem{Koukoulopoulos-Maynard}
D.~Koukoulopoulos and J.~Maynard.
\newblock {On the Duffin-Schaeffer conjecture}.
\newblock {\em Annals of Mathematics}, 192(1):251 -- 307, 2020.

\bibitem{LP2009}
I.~{\L}aba and M.~Pramanik.
\newblock Arithmetic progressions in sets of fractional dimension.
\newblock {\em Geom. Funct. Anal.}, 19(2):429--456, 2009.

\bibitem{mattila-book-2}
P.~Mattila.
\newblock {\em Fourier analysis and {H}ausdorff dimension}, volume 150 of {\em
  Cambridge Studies in Advanced Mathematics}.
\newblock Cambridge University Press, Cambridge, 2015.

\bibitem{ramirez2023duffinschaeffer}
F.~A. Ramirez.
\newblock The {D}uffin--{S}chaeffer conjecture for systems of linear forms.
\newblock 2023.
\newblock https://arxiv.org/abs/2211.09996.

\bibitem{RYNNE1998166}
B.~P. Rynne.
\newblock The {H}ausdorff dimension of sets arising from {D}iophantine
  approximation with a general error function.
\newblock {\em Journal of Number Theory}, 71(2):166--171, 1998.

\bibitem{Salem51}
R.~Salem.
\newblock On singular monotonic functions whose spectrum has a given
  {H}ausdorff dimension.
\newblock {\em Ark. Mat.}, 1:353--365, 1951.

\bibitem{schmidt}
W.~M. Schmidt.
\newblock Metrical theorems on fractional parts of sequences.
\newblock {\em Trans. Amer. Math. Soc.}, 110:493--518, 1964.

\bibitem{shmerkin-suomala}
P.~Shmerkin and V.~Suomala.
\newblock Spatially independent martingales, intersections, and applications.
\newblock {\em Mem. Amer. Math. Soc.}, 251(1195):v+102, 2018.

\bibitem{Wigert}
C.~Wigert.
\newblock {\em Sur l'ordre de grandeur du nombre des diviseurs d'un entier}.
\newblock Arkiv f{\"o}r matematik, astronomi och fysik. Almqvist \& Wiksell,
  1907.

\end{thebibliography}
\end{document}